\def\N{\mathbb{N}}
     \def\Hom{{\rm Hom}}
\def\ad{{\rm ad}}
\newcommand{\CCinf}{\mathcal{C}^\infty}
\newcommand{\ph}{[[\lambda]]}
\newtheorem{theorem}{Theorem}[section]
\newtheorem{prop}{Proposition}[section]
\newtheorem{defi}{Definition}[section]
\newtheorem{rem}{Remark}[section]
\newcommand{\naturalto}{%
	\mathrel{\vbox{\offinterlineskip
			\mathsurround=0pt
			\ialign{\hfil##\hfil\cr
				\normalfont\scalebox{1.2}{.}\cr
				$\longrightarrow$\cr}
	}}%
}
\begin{document}
	
	\thispagestyle{empty}
	\pagestyle{empty}
	\title
	{\LARGE \bf Noncommutative localization in 
		smooth deformation quantization}

	
	\author{
		Martin Bordemann, \\
		\texttt{Martin.Bordemann@uha.fr},\\[2mm]
		Universit\'{e} de Haute Alsace, Mulhouse, France\\[2mm]
		Benedikt Hurle, \\
		\texttt{Benedikt.Hurle@uha.fr}, \\[2mm]
		Chern Institute of Mathematics, Nankai University, Tianjin, P.R China\\[2mm]
		Hamilton Menezes de Araujo, \\
		\texttt{Hamilton.Araujo@uha.fr}, 	\texttt{Hamilton.Araujo@unilasalle.fr}\\
	[2mm]
		Universit\'{e} de Haute Alsace, Mulhouse, France\\
		Institut Polytechnique UniLaSalle, Beauvais, France\\[2mm]
		    Second Version
	}
	

	\maketitle
	\thispagestyle{empty}
	
	\vspace{1cm}
	\small
	\begin{center}
		\noindent {\bf Abstract}\\[5mm]
		\thispagestyle{empty}
		\begin{minipage}{13cm}
			In this paper we shall show the equivalence 
			of analytic localization and noncommutative algebraic localization (in the general and in
			Ore's sense) for algebras of smooth deformation
			quantization for several situations. The proofs are based on 
			old work by Whitney, Malgrange and
			Tougeron on the commutative algebra of smooth function 
			rings from the 60's and 70's. We comment
			on the more general situation of localizations of differential star products of commutative algebras equipped with a multiplicative subset.
		\end{minipage}
	\end{center}
	\normalsize
	
	\newpage
	
	\tableofcontents
	\thispagestyle{empty}
	\newpage
	
	\pagestyle{plain}
	
	\section*{Introduction}
	\addcontentsline{toc}{section}{Introduction}
	
Localization in  commutative algebra means a universal construction 
where a set of chosen elements in a given commutative
ring is made invertible (they will become denominators): 
the outcome is called a ring of fractions.
The classical example is the 
well-known passage from the integers to the field of rational
numbers. It is a very important tool in algebraic 
and analytical geometry. In differential geometry, however,
localization is rather used in the analytic sense, 
i.e.~the passage from globally defined smooth functions to those
which are only defined on an open subset. It follows from
the classical works by Whitney, Malgrange \cite{Mal67} and Tougeron \cite{Tou72}
that these analytical localizations are often isomorphic to
certain algebraic localizations in the smooth (or 
even $\mathcal{C}^k$, $k\in\mathbb{N}$) case, see also
\cite{FS98}, \cite{RS94} and the book \cite{NS03}.

For noncommutative algebras the problem of algebraic
localization has two solutions: a general construction (see e.g.~\cite[p.289]{Lam99}), and the better known solution
	initiated by \O{}.Ore \cite{Ore31} in the 1930's
	requiring additional conditions on the multiplicative
	subset, the famous Ore conditions. It turns out
	that the general construction is rather inexplicit
	and in some situations not very practical. On the other hand, the more particular Ore localization shares
	almost all properties of the commutative localization.
	
	In this work we would like to study noncommutative localization of algebras arising in deformation quantization:
	in this theory --invented by \cite{BFFLS78} in 1978--
	formal associative deformations of the algebra of all
	smooth complex-valued functions on a Poisson manifold,
	so-called star products, are studied aiming at an interpretation of the noncommutative
	multiplication of operators used in quantum mechanics.
	It is well-known that the first order commutator of such
	a deformation always gives rise to a Poisson bracket, but it
	is highly non-trivial to show that every Poisson bracket
	arises as a first order commutator of a deformation: this
	latter result is the famous Kontsevich formality Theorem,
	\cite{Kon03}.
	We consider star products given by formal power series of
	bidifferential operators (as almost every-one): these
	multiplications immediately define star products of
	locally defined functions by suitable `restrictions'.
	We mention that noncommutative localization has been
	used for the algebras in noncommutative geometry to
	describe inverses of functions appearing in passing to 
	coordiates, see e.g.~the recent work \cite{ACH16},
	\cite{AW17a}, and \cite{AW17b}.\\
	In this article we have chosen the algebra of smooth functions on a smooth manifold and not a framework
	of algebraic or analytical geometry which would have required a sheaf theoretic approach: firstly, historically deformation quantization has been formulated in a	differential geometry framework
	and has farther reaching existence and uniqueness results for smooth functions, secondly, in the smooth world there is no urgent need to pass to sheaves which simplifies the
	exposition, and thirdly, the commutative
	algebra of smooth function algebras seems to be 
	of a different, `funnier' kind (not Noetherian) which
	we liked to rediscover from the classical literature,
	in particular from the books \cite{Tou72} and \cite{NS03}.\\
	We first show that this analytical localization 
	of star product algebras is 
	isomorphic to the algebraic localization
	with respect to the set of all those formal power series
	of smooth functions whose zeroth order term does nowhere
	vanish on the given open set. As a by-product we have the
	result that this multiplicative set satisfies the right
	(and also left) Ore condition.\\
	In a similar way we can show that the set of all 
	germs of a star product algebra at a given point of the
	manifold --defined in analytic terms-- is isomorphic to the noncommutative localization
	of the complement of the maximal ideal of all those
	formal power series of functions whose term of order
	zero vanishes at the point.\\
	We also sketch a more general algebraic framework inspired by the two preceding results: given a commutative associative unital algebra $A$ over a commutative ring $K$, a multiplicative subset $S_0$ of $A$, and a bidifferential star product $\star$ (the bidifferential operators are defined in the well-known algebraic sense) then the following two constructions can be compared: first, localizing first the bidifferential operators \`{a} la G.Vezzosi \cite{Vez97} there is
	a star product $\star_{S_0}$ on $A_{S_0}[[\lambda]]$, the formal power series whose terms are in the localized algebra $A_{S_0}$, and secondly,
	considering first the natural `deformation of $S_0$, 
	$S=S_0+\lambda A[[\lambda]]$, which is a noncommutative multiplicative subset of the star product algebra
	$\big(A[[\lambda]],\star\big)$ there is the noncommutative
	localization (a priori in the general sense) of
	$A[[\lambda]]$ with respect to the deformation $S$ of $S_0$. Two quite natural questions arise: `\emph{Does localization commute with deformation ?}' --on which
	we give a positive answer in case $S_0$ has a sort of
	`common multiple property for sequences'--and
	`\emph{Is $S$ right or left Ore ?}' for which we give 
	an elementary
	counterexample in Section \ref{SecNonOreExample}.

	The paper is organised as follows: in the first section we
	recall some basic concepts of the commutative algebra of smooth function algebras (where we could not resist 
	the pleasure of the completely unnecessary study of its prime ideals) following Tougeron's book \cite{Tou72}, 
	and of (non)commutative localization
	following Lam's very nice text-book \cite{Lam99}. 
	\\
	In Section 2 we show the first localization result concerning
	open sets: an important tool is Tougeron's \emph{fonction aplatisseur} \cite{Tou65} which makes a given sequence of locally defined smooth
	functions globally defined by multiplication with a single suitable function being
	nowhere zero on the given open set. In this proof, we had to
	make explicit use of the seminorms defining the
	Fr\'{e}chet topology of the smooth
	function space.\\
	In Section 3 we prove a similar result for germs, heavily relying on the first theorem. Note that
	there is slight, but important difference between `germs over the formal
	power series ring $\mathbb{K}[[\lambda]]$' which we describe and the `formal power series of germs'
	on which we comment in Section
	\ref{SecCommutativelyLocalizedStarProducts}.\\
	Section 4 is devoted to the above-mentioned discussion of commutative localization
	of multidifferential operators (defined in the algebraical way) for any commutative
	algebra $A$ (over some unital commutative ring $K$)
	by general commutative 
	multiplicative subsets $S_0$ and its comparison with a `natural' noncommutative localization with respect to
	$S=S_0+\lambda A[[\lambda]]$. The technical tool
	will be a localization theorem of algebraic differential operators due to G.Vezzosi, 1997, \cite{Vez97}.\\
	In Section 5 we describe a simple example of a multiplicative set of the 
	type $S=S_0+\lambda A[[\lambda]]$ in smooth deformation quantization of the plane which is not Ore.\\

	\subsubsection*{Acknowledgements}
	
	The authors would like to thank Joakim Arnlind, Alberto Elduque, Jens Hoppe, Camille Laurent-Gengoux, Daniel Panazzolo, Leonid Ryvkin, 
	Zoran \v{S}koda, and Friedrich Wagemann for many valuable discussions and hints.

	
	\section{Review of basic concepts}
	
	Let $K$ be a fixed commutative associative unital ring such that
	$1=1_K\neq 0=0_K$.
	All $K$-algebras are supposed to be associative and unital. 
	We shall include unital  $K$-algebras $R$ isomorphic to $\{0\}$ for which $1_R=0_R$. Note that associative
	unital rings are always $\mathbb{Z}$-algebras in a natural
	way. In order to avoid clumsy notation we shall not
	write $1_R$, $1_K$, $0_R$ or $0_K$, but simply $1$ and $0$
	where the precise interpretation should be clear from the
	context.

	\subsection{Review of Commutative Algebra for smooth function algebras}
	
	\subsubsection{Elementary features of function algebras}
	\label{SubSubSecElementary features of function algebras}
	
	For the convenience of the reader (working in differential geometry) we shall give the following
	elementary survey which can be ignored on first reading. For more information see e.g.~\cite{BouCommAlg}.\\
	Recall some elementary commutative algebra of unital $K$-algebras of functions on a set: let
	$X$ be a set, $K$ be a commutative domain, and $R$ be a given unital subalgebra
	of the $K$-algebra of all the functions $X\to K$. As usual, for any subset
	$Y\subset X$, let $I(Y)\subset R$ be the set of functions in $R$ vanishing on
	$Y$ which is always an ideal of $R$ (the \emph{vanishing ideal of $Y$}), and for any subset $J\subset R$ let
	$Z(J)\subset X$ be the subset of those points of $X$ on which all functions in $J$ vanish. Clearly $Y\subset Z(I(Y))$ and $J\subset I(Z(J))$. Moreover for any two ideals $I_1$ and $I_2$ of $R$ it follows
	that $Z(I_1)\cup Z(I_2)= Z(I_1I_2)$ (here the fact that $K$ is a domain is used), hence the set of all subsets
	$Z(I)$, $I$ ideal of $R$, satisfies the axioms of the closed sets of
	a topology on $X$ called the \emph{Zariski topology on $X$ w.r.t.~$R$}.
	These closed subsets could be called \emph{$R$-algebraic sets}: in the particular
	case where $K=\mathbb{K}$ is a field, $X=\mathbb{K}^n$, and $R$ the polynomial ring $\mathbb{K}[x_1,\ldots,x_n]$
	these are the \emph{algebraic subsets}, whereas for $R$ being the algebra
	of analytic functions (for $\mathbb{K}=\mathbb{R}$
	or $\mathbb{K}=\mathbb{C}$) these sets are called 
	\emph{analytic subsets}. It is well-known that for
	$X=\mathbb{R}^n$ and $R$ the ring of smooth real-valued functions the Zariski topology of $X=\mathbb{R}^n$
	coincides with its usual topology. Returning to the general situation, note that the Zariski closure of 
	a set $Y\subset X$ is equal to $Z(I(Y))$. On the other hand
	the inclusion $J\subset I(Z(J))$ can sometimes be made more precise
	by what is called a
	\emph{Nullstellensatz}: in algebraic geometry over algebraically closed fields $I(Z(J))$ is equal to the
	set of all polynomials in $R$ such that a certain power
	is in $J$.\\
	Recall that an ideal $I\subset R$ is called \emph{proper} iff
	$I\neq R$ iff $1\not\in I$. Moreover, a proper ideal $\mathfrak{m}$ is
	called \emph{maximal} iff it is equal to any other proper ideal containing it iff --since $R$ is commutative-- the factor algebra 
	$R/\mathfrak{m}$ is a field. Recall that a
	\emph{multiplicative set} $S\subset R$ is a subset $S$
	of $R$
	containing $1$ and if $s,s'\in S$ then $ss'\in S$. Moreover a general proper ideal 
	$\mathfrak{p}$ of
	$R$ is called
	a \emph{prime ideal} iff the factor algebra 
	$R/\mathfrak{p}$ is a domain
	iff the complementary set $S=R\setminus \mathfrak{p}$ is a \emph{multiplicative subset}.
	Recall that \emph{Krull's Lemma} states that given any multiplicative subset $S\subset R$ and ideal $J$ with
	$J\cap S=\emptyset$ there is a prime ideal $\mathfrak{p}\supset J$
	with $S\cap \mathfrak{p}=\emptyset$, see e.g.~\cite[p.391, Prop.~7.2, Prop.~7.3]{Jac80}.

	\subsubsection
	{Analytical features of smooth function algebras}
	\label{SubSubSec Commutative algebra of smooth function algebras}
	
	Let $X$ be an $N$-dimensional differentiable manifold (whose underlying topological space
	we shall always assume to be Hausdorff and second countable).
	Let $\mathbb{K}$ denote either the field of all real numbers, 
	$\mathbb{R}$, or the field of all complex numbers,  $\mathbb{C}$.
	For any real vector bundle $E$ over $X$ we shall denote by the same symbol
	$E$ its complexification. Consider the $\mathbb{K}$-algebra 
	$A=\mathcal{C}^\infty(X,\mathbb{K})$ of
	all smooth $\mathbb{K}$-valued functions $f$ on $X$. Even in the case
	where $X$ is an open subset of $\mathbb{R}^n$ the algebraic properties
	of $A$ are rather different from the function algebras used in
	algebraic or analytic geometry. There has been much work on that
	in the past, see e. g. \cite{Mal67}, \cite{Tou72}, based on the classical works
	by Whitney. We shall give a short outline of the features we shall need.
	
	The $\mathbb{K}$-vector space $A$ is given a well-known Fr\'{e}chet
	topology which can be conveniently defined in the following terms:
	fix a Riemannian metric $h$ on $X$, and let $\nabla$ denote its
	Levi-Civita connection. For any nonnegative integer $n$ denote by
	$\mathsf{S}^nT^*X$ the $n$th symmetric power of the cotangent bundle
	of $X$: its smooth sections can be viewed as smooth functions
	on the tangent bundle $\tau_X:TX\to X$ which are of homogeneous polynomial degree $n$
	in the direction of the fibres. For any section $\alpha\in 
	\Gamma^\infty(X,\mathsf{S}^nT^*X)$ let $D\alpha\in\Gamma^\infty(X,\mathsf{S}^{n+1}T^*X)$ be it is symmetrized
	covariant derivative w.r.t.~$\nabla$ which can be seen as a
	symmetric version (depending on $\nabla$!) of the exterior derivative.
	Finally for any smooth function $f:X\to\mathbb{K}$ let 
	$D^nf\in\Gamma^\infty(X,\mathsf{S}^nT^*X)$ be the $n$fold iterated
	symmetrized covariant derivative of $f$, hence $D^0f=f$, $Df=df$,
	$D^{n+1}f=D(D^nf)$. For any compact set $K\subset X$ and any nonnegative
	integer $m$ define a system of functions $p_{K,m}:A\to \mathbb{R}$ by
	\begin{equation}\label{EqDefSeminorms}
		p_{K,m}(f)=\max\{|D^nf(v)|~|~n\leq m,~\tau_X(v)\in K~
		\mathrm{and}~h(v,v)\leq 1\},
	\end{equation}
	which will define an exhaustive system of seminorms, hence a locally
	convex topological vector space which is known to be metric and
	sequentially complete, hence Fr\'{e}chet.
	It is not hard to see that the choice of another Riemannian metric
	will give another system of seminorms which is equivalent to the first one. For flat $\mathbb{R}^n$ equipped with the usual euclidean scalar product these seminorms are easily seen to be equivalent to the
	usual seminorms used in analysis where the higher partial derivatives
	are expressed by multi-indices. Pointwise multiplication and
	evaluation at a point are well-known to be continuous w.r.t.~to the Fr\'{e}chet topology.\\
	For later use we shall give the usual definition of \emph{multidifferential operators}
	$D:A\times \cdots\times A\to A$ of rank $p$ as a $p$-linear
	map (over the ground field $\mathbb{K}$) such that there is a nonnegative integer $l$ and for each
	chart $(U,(x^1 ,\ldots,x^N))$ there are smooth functions
	$D^{\alpha_1\cdots\alpha_p}:U\to \mathbb{K}$ indexed by
	$p$ multi-indices $\alpha_1,\ldots,\alpha_p\in\mathbb{N}^N$ such that for each point $x\in U$, and smooth functions
	$f_1,\ldots,f_p\in A$ there is the following local expression
	\begin{equation}\label{EqDefMultiDifferentialOperatorsAn}
		D(f_1,\ldots,f_p)(x)
		=\sum_{|\alpha_1|,\ldots,|\alpha_p|\leq l}
		D^{\alpha_1\cdots\alpha_p}(x)
		\frac{\partial^{|\alpha_1|}\big(f_1|_U\big)}
		{\partial x^{\alpha_1}}(x)\cdots
		\frac{\partial^{|\alpha_p|}\big(f_p|_U\big)}
		{\partial x^{\alpha_p}}(x)     
	\end{equation}
	where as usual $|\alpha|=|(i_1,\ldots,i_N)|=i_1+\ldots+i_N$ and $\frac{\partial^{|\alpha|}\phi}{\partial x^\alpha}$ is short for $\frac{\partial^{i_1+\ldots+i_N}\phi}
	{\partial (x^1)^{i_1}\cdots \partial (x^N)^{i_N}}$.
	Note that the value of $D(f_1,\ldots,f_p)$ at $x$ only
	depends on the restriction of the functions $f_1,\ldots, f_p$
	to any open neighbourhood of $x$: it follows that multidifferential operators can always be \emph{localized in the analytical
		sense} that they give rise to unique well-defined multidifferential
	operators $D_U$ on $\mathcal{C}^\infty(U,\mathbb{K})$ for any
	open subset $U\subset X$ such that $D$ and $D_U$ intertwine the
	restriction map $\eta:A\to \mathcal{C}^\infty(U,\mathbb{K})$
	in the obvious way.
	Multidifferential operators
	are well-known to be continuous w.r.t.~to the Fr\'{e}chet topology. Furthermore, recall the usual composition rule
	of multidifferential operators (inherited by the usual rule for multi-linear maps): given two multidifferential
	operators $D$ (of rank $p$) and
	$D'$ (of rank $q$) and a positive integer $1\leq i\leq p$ then the map $D\circ_i D'$ defined by $(f_1,\ldots,f_{p+q-1})\mapsto 
	D\big(f_1,\ldots,f_{i-1},D'(f_i,\ldots,f_{i+p-1}),
	f_{i+p},\ldots,f_{p+q-1}\big)$ is a multidifferential operator of rank $p+q-1$. This composition rule
	obviously is compatible with localization in the sense
	that $\big(D\circ_i D'\big)_U=D_U\circ_i D'_U$.\\
	Consider for any given point
	$x_0\in X$ the binary relation $\sim_{x_0}$ on $A$ defined by $f\sim_{x_0} g$ iff the two smooth functions $f$ and $g$ have the same Taylor series at $x_0$
	w.r.t.~to some chosen chart around $x_0$. It is 
	well-known to be an equivalence relation which does not depend on the chosen chart, an equivalence class is called an \emph{infinite jet}, and
	the class of $f$ is called \emph{the infinite jet $j^\infty_{x_0}(f)$ of $f$}, see e.g.~\cite[p.117 section 12]{KMS93}.\\
	Apart from the
	trivial case where $X$ is a point, $A$ is well-known to have two `bad' features from the point of view of commutative algebra: firstly the product of two non-zero
	functions with disjoint supports --which exist in $A$-- clearly vanishes showing that $A$ \emph{has very many nontrivial 
	zero-divisors}. Secondly $A$ \emph{is NOT Noetherian}: if for each nonnegative integer $n$ we denote by $I_n$ the ideal of all smooth $\mathbb{K}$-valued functions on $\mathbb{R}$ vanishing
	on the closed interval 
	$\left[-\frac{1}{n+1},\frac{1}{n+1}\right]$, then the
	ascending sequence $I_n\subset I_{n+1}$ never stabilizes
	after a finite number of steps.
	
	\subsubsection{Some ideal theory of smooth function algebras}
	
	In this paragraph we collect some facts of maximal
	and prime ideals of $A=\mathcal{C}^\infty(X,\mathbb{K})$
	which are major topics in commutative algebra.
	The main source will be J.-C.~Tougeron's classic
	\cite{Tou72}. This paragraph can be ignored by the impatient reader.\\
	For a given ideal $J$ of $A$ its zero set $Z(J)\subset X$ is of course a closed subset of $X$, and the closure $\overline{J}$ of $J$ (w.r.t.~the Fr\'{e}chet topology) in $A$ remains an ideal of $A$. The vanishing ideal of any set is closed in the Fr\'{e}chet topology, hence there is the chain of inclusions
	$J\subset \overline{J}\subset I(Z(J))$. Since for any point $x$ not contained in $Z(J)$ there is a function $g$ in the ideal $J$ not vanishing at $x$, a simple partition of unity argument shows that any smooth $\mathbb{K}$-valued function whose support is compact and has empty intersection with $Z(J)$ must be an element of $J$. It follows in particular that an ideal $J$ contains the ideal $\mathcal{D}(X)$ of all smooth $\mathbb{K}$-valued functions
	having compact support iff $Z(J)=\emptyset$ iff $J$ is dense in $A$. In the particular case where $X$ is compact this means that the only dense ideal is equal to $A$.
	Returning to general $X$ it follows that
	every proper ideal of $A$ which is \emph{closed} w.r.t.~the
	Fr\'{e}chet topology has a non-empty set of common zeros.\\
	In particular, every \emph{closed maximal ideal} of $A$
	is equal to the vanishing ideal $I_{x_0}=I(\{x_0\})$ of some
	point $x_0\in X$.\\
	In general, for the closure of an ideal there is the very useful \emph{Whitney's Spectral Theorem} stating that
	a function $g$ belongs to the closure $\overline{J}$ of an
	ideal $J$ of $A$ iff for each $x\in X$ there is a function $h\in J$
	(whose choice may depend on $x$)
	whose infinite jet $j_x^\infty(h)$ is equal to $j_x^\infty(g)$, see e.g.~\cite[p.~91, Cor.~1.6., cas $q=1$]{Tou72}. Moreover, ideals having finitely many analytic generators
	are always closed, see e.g.~\cite[p.119, Cor.~1.6.]{Tou72},
	but there are also closed ideals having finitely many nonanalytic generators, see e.g.~\cite[p.104,~Rem.~4.7, Exemp.~4.8.]{Tou72}. \\
	In the following, given $x_0\in X$, denote by $\mathfrak{I}_{x_0}$ the ideal of $A$ consisting of all smooth functions
	vanishing in some neighbourhood of $x_0$, and by
	$I^\infty_{x_0}$ the ideal of $A$ consisting of all functions $f$ such that $j^\infty_{x_0}(f)=0$.
	Clearly $\mathfrak{I}_{x_0}\subset I^\infty_{x_0}\subset I_{x_0}$.
	Consider now a \emph{prime ideal} 
	$\mathfrak{p}\subset A$  of $A$. We know that it is either dense iff $Z(\mathfrak{p})=\emptyset$ or has a
	nonempty zero set. For each prime ideal it can be shown
	that
	\begin{equation}\label{EqCompProperPrimeFirstProperties}
	 Z(\mathfrak{p})\neq \emptyset
	 ~~\Leftrightarrow~~\exists ~ x_\mathfrak{p}\in X:
	  Z(\mathfrak{p}) = \{x_\mathfrak{p}\}~~\Leftrightarrow~~
	  \exists ~ x_0\in X:~\mathfrak{I}_{x_0}\subset \mathfrak{p}~~\Leftrightarrow~~
	     \exists ~ y_0\in X: \mathfrak{p}\subset I_{y_0}.
	\end{equation}
	and in case one the four equivalent statements is fulfilled then $x_0=y_0=x_\mathfrak{p}$, uniquely determined by $\mathfrak{p}$. \\
	Indeed, it is obvious that in
	eqn (\ref{EqCompProperPrimeFirstProperties}) the second statement implies the first which is equivalent to the fourth. Moreover, if $Z(\mathfrak{p})$ contained
	two distinct points $x_1,x_2\in X$ there would be two smooth functions
	$\varphi_1,\varphi_2\in A$ with disjoint supports such that $\varphi_1(x_1)=1=\varphi_2(x_2)$ (whence 
	$\varphi_1,\varphi_2\in A\setminus \mathfrak{p}$),
	but $\varphi_1\varphi_2=0\in\mathfrak{p}$ contradicting the fact that 
	$\mathfrak{p}$ is prime whence the first, the second,
	and the last statement of (\ref{EqCompProperPrimeFirstProperties})
	are equivalent implying the uniqueness and equality of
	$x_\mathfrak{p}$ and $y_0$ in case one of three statements
	is fulfilled. Moreover supposing that $Z(\mathfrak{p})=\{x_\mathfrak{p}\}$ then for any $h\in\mathfrak{I}_{x_\mathfrak{p}}$ there is $\varphi\in A$ with 
	$\varphi(x_\mathfrak{p})=1$ having its support inside the open neighbourhood of $x_\mathfrak{p}$ on which $h$ vanishes, whence
	$\varphi\in A\setminus \mathfrak{p}$, but 
	$\varphi h=0\in\mathfrak{p}$ so $h\in\mathfrak{p}$ since
	$\mathfrak{p}$ is prime, whence the second statement
	of (\ref{EqCompProperPrimeFirstProperties}) implies the
	third.  Finally, supposing $\mathfrak{I}_{x_0}\subset \mathfrak{p}$ for some $x_0\in X$, if there was a $g\in\mathfrak{p}$
	with $g(x_0)\neq 0$ one would find a positive valued
	function $h\in \mathfrak{I}_{x_0}\subset \mathfrak{p}$
	and a bump function $\chi\in A$ such that 
	$\chi|g|^2+h\in\mathfrak{p}$ has only strictly positive values, hence is invertible
	implying $\mathfrak{p}=A$  which contradicts the fact that $\mathfrak{p}$ is proper. Hence $\mathfrak{p}\subset I_{x_0}$ whence
	the third statement implies the last
	in eqn (\ref{EqCompProperPrimeFirstProperties}) and the
	equality $x_\mathfrak{p}=x_0=y_0$.
	\\
	Next we shall look at \emph{closed prime ideals}
	of $A$:
	fix a point $x_0\in X$, then
	by Borel's classical Lemma  (see e.g.~\cite[p.332,~Satz~5.3.33]{Wal07})
	the factor algebra $A/I^\infty_{x_0}$ is isomorphic
	to the algebra of formal power series 
	$\mathbb{K}[[x_1,\ldots,x_n]]$ which is a domain whence
	$I^\infty_{x_0}$ is a prime ideal which is closed
	since $f\mapsto j^k_{x_0}(f)$ is continuous for every
	nonnegative integer $k$. Moreover the obvious inclusion
	$\mathfrak{I}_{x_0}\subset I^\infty_{x_0}$ implies
	$\overline{\mathfrak{I}_{x_0}}\subset I^\infty_{x_0}$ since $I^\infty_{x_0}$ is closed. 
		Since for any
	$g\in I^\infty_{x_0}$ we have by definition 
	$j^\infty_{x_0}(g)=0=j^\infty_{x_0}(0)$, and for any
	$y\in X\setminus\{x_0\}$ there is a smooth function
	$\chi\in A$ vanishing in a suitable open neighbourhood of
	$x_0$ and having the constant value $1$ in another suitable open neigbourhood of $y$ it follows that
	$\chi g\in \mathfrak{I}_{x_0}$ and 
	$j^\infty_{y}(\chi g)=j^\infty_{y}(g)$ whence
	$g\in \overline{\mathfrak{I}_{x_0}}$ thanks to Whitney's
	spectral theorem. This implies the equality 
	$\overline{\mathfrak{I}_{x_0}}= I^\infty_{x_0}$.
	By passing to closures in eqn
	(\ref{EqCompProperPrimeFirstProperties}) it immediately follows that for any proper closed prime ideal $\mathfrak{p}$
	\begin{equation}
	 \label{EqCompPropertiesClosedPrimes}
	\mathrm{if}~\mathfrak{p}=\overline{\mathfrak{p}}\neq A
	~\mathrm{and}~Z(\mathfrak{p})=\{x_\mathfrak{p}\}:
	~~~~~\mathfrak{I}_{x_\mathfrak{p}}\subset \overline{\mathfrak{I}_{x_\mathfrak{p}}} 
	= I^\infty_{x_\mathfrak{p}}
	  \subset \mathfrak{p} \subset I_{x_\mathfrak{p}}.
	\end{equation}
 Conversely, another simple application of Whitney's spectral theorem shows that for any prime ideal $\mathfrak{p}$ the inclusion
 $I^\infty_{x_0}\subset \mathfrak{p}\subset I_{x_0}$ implies
 that $\mathfrak{p}$ is proper and closed.
 Moreover, it follows that for each given $x_0\in X$ the set of all closed prime ideals $\mathfrak{p}\subset A$ with 
 $Z(\mathfrak{p})=\{x_0\}$ is in bijection with
 the set of all the prime ideals of the formal power series
 algebra $\mathbb{K}[[x_1,\ldots,x_n]]$ via the map
 $\mathfrak{p}\mapsto \mathfrak{p}/I_{x_0}^\infty$.
 These latter prime ideals can be characterized in a purely algebraic way, see e.g.~ 
 \cite[p.~31,~Prop.~2.2]{Tou72}. Thirdly it is somewhat harder to see that
 $I^\infty_{x_0}I^\infty_{x_0}=I^\infty_{x_0}$, see
\cite[p.93, Lemme 2.4]{Tou72}
(for the particular case where the closed set equals 
 $\{x_0\}$) implying that $I^\infty_{x_0}$ is equal to the intersection of all the powers of $I_{x_0}$.\\
 Note that there are very many `funny' non closed prime
 ideals of $A$ (even in the case where $X$ is compact):
 by applying Krull's Lemma to the ideal $\mathcal{D}(X)$
 and the multiplicative subset generated by an arbitrary fixed function $f$ having non compact support we have the existence of a dense prime ideal which does not contain $f$. Likewise, applying Krull's Lemma to the
 ideal $\mathfrak{I}_{x_0}$ and the multiplicative subset
 generated by an arbitrary function $g\in I^\infty_{x_0}$
 which is not in $\mathfrak{I}_{x_0}$ we get a proper
 non closed prime ideal $\mathfrak{p}$ with 
 $Z(\mathfrak{p})=\{x_0\}$, hence containing $\mathfrak{I}_{x_0}$, but not $I^\infty_{x_0}$.

	\subsection{(Algebraic) Localization}
	\label{SubSecAlgebraicLocalization}
	
	This section recalls well-known results which we present
	according to the excellent text-book \cite{Lam99} in a categorically
	`tuned' version. See also the rather useful review
	\cite{Sko2006} for more aspects.

	\subsubsection{Commutative Localization}
	
	Recall that for any domain $ R $ it is always possible to construct a field, called the field of fractions of  $ R $, by formally inverting all nonzero elements. 
	More generally,  recall the \emph{localization of a commutative 
		$K$-algebra $R$}: let $S\subset R$ be a \emph{multiplicative subset}
	(which is characterized by containing the unit and for any two
	of its elements its product). Then the following binary relation
	$\sim$ on $R\times S$ defined by
	\begin{equation}\label{EqDefFractionsEquivalenceRelComm}
		(r_1,s_1)\sim (r_2,s_2)~~~\mathrm{if~and~only~if}~~~
		\exists~s\in S:~r_1s_2s=r_2s_1s
	\end{equation}
	is an equivalence relation, and the set of all classes (written as (symbolic) fractions $\frac{r}{s}$) forms a commutative
	$K$-algebra $R_S$ --by means of the usual addition and multiplication rules of fractions-- called the \emph{quotient ring}, and a ring homomorphism (the \emph{numerator morphism}) 
	$\eta_{(R,S)}=\eta:R\to R_S$ given by $r\mapsto \frac{r}{1}$
	which in particular defines the $K$-algebra structure of 
	$R_S$.
	Let $U(R)\subset R$ denote the multiplicative group of invertible
	elements of $R$. A morphism of unital $K$-algebras
	$\Phi:R\to R'$ is called \emph{$S$-inverting} (for a multiplicative
	subset $S\subset R$) if for each $s\in S$ the image
	$\Phi(s)$ is invertible in $R'$, hence $\Phi(S)\subset U(R')$. The following properties of the constructions can be observed:
	\begin{prop} \label{PLocalizationCommutative}
		Let $R$ be a commutative $K$-algebra and 
		$S\subset R$ be a multiplicative subset. Then the following
		is true:
		\begin{enumerate}
			\item[a.] $ \eta_{(R,S)}(S)\subset U(R_S)$, that is, the homomorphism $\eta_{(R,S)}$ sends elements of  $S$ to invertible elements of $R_S$. Moreover, for any commutative unital $K$-algebra $R$ equipped with a multiplicative subset $S\subset R$, the pair $(R_S,\eta_{(R,S)})$ is \textbf{universal} in the sense that any $S$-inverting morphism of unital  
			$K$-algebras
			uniquely factorizes, i.e.~the following diagram commutes:
			\begin{equation}\label{EqUniversalityOfNumeratorMap}
				\xymatrix{ R \ar[r]^{\eta}\ar[dr]_\alpha &  R_S  \ar[d]^f \\
					&	R' & }
			\end{equation}
			where $f$ is a morphism of unital $K$-algebras determined by
			$\alpha$, see e.g.~\cite[p.55, Ch.III]{Mac98} 
			for definitions of universal objects.
			\item[b.] Every element of $ R_S $ can be written as
			a fraction 
			$\eta(r)\eta(s)^{-1}, $ for some $r\in R $ and $s\in S$.
			\item[c.] $ \ker(\eta_{(R,S)})=
			\{r\in R~|~rs=0\mathrm{\:for\:some\:}s\in S\} $.
		\end{enumerate}
	\end{prop}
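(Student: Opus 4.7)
The plan is to verify the three assertions directly from the construction of $R_S$ via equivalence classes of pairs, treating part (a) as the only substantial piece.

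First I would verify that the relation $\sim$ in (\ref{EqDefFractionsEquivalenceRelComm}) is an equivalence relation (reflexivity and symmetry are immediate; transitivity requires multiplying the two auxiliary witnesses in $S$ together and invoking the multiplicative closure of $S$), and then check that the usual formulas $\frac{r_1}{s_1} + \frac{r_2}{s_2} := \frac{r_1 s_2 + r_2 s_1}{s_1 s_2}$ and $\frac{r_1}{s_1} \cdot \frac{r_2}{s_2} := \frac{r_1 r_2}{s_1 s_2}$ are well-defined on classes and endow $R_S$ with the structure of a commutative unital $K$-algebra with unit $\frac{1}{1}$ and zero element $\frac{0}{1}$. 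The numerator map $\eta\colon r \mapsto \frac{r}{1}$ is then a $K$-algebra morphism by direct inspection.

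For part (a), I would first compute $\eta(s)\cdot\frac{1}{s} = \frac{s}{s}$ and observe that $(s,s) \sim (1,1)$ with auxiliary witness $1 \in S$, so $\eta(s)$ is a unit with inverse $\frac{1}{s}$. For the universal property, given any $S$-inverting morphism $\alpha\colon R \to R'$, I would define $f\colon R_S \to R'$ by $f\bigl(\frac{r}{s}\bigr) := \alpha(r)\,\alpha(s)^{-1}$. The key step is well-definedness: if $(r_1,s_1)\sim(r_2,s_2)$ with witness $s\in S$, then applying $\alpha$ to $r_1 s_2 s = r_2 s_1 s$ and cancelling the invertible elements $\alpha(s),\alpha(s_1),\alpha(s_2)$ (which commute with every $\alpha(r)$ because $R$ is commutative, so their images pairwise commute and hence their inverses do as well) yields $\alpha(r_1)\alpha(s_1)^{-1} = \alpha(r_2)\alpha(s_2)^{-1}$. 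Verifying that $f$ is a $K$-algebra homomorphism with $f\circ\eta = \alpha$ is then routine, and uniqueness will follow from part (b).

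Parts (b) and (c) follow quickly. For (b), any class $\frac{r}{s}$ equals $\frac{r}{1}\cdot\frac{1}{s} = \eta(r)\,\eta(s)^{-1}$, from which the uniqueness in (a) also drops out, since any candidate $g\colon R_S \to R'$ with $g\circ \eta = \alpha$ is forced by multiplicativity to satisfy $g\bigl(\eta(r)\eta(s)^{-1}\bigr) = \alpha(r)\alpha(s)^{-1}$. For (c), $\eta(r) = 0$ means $\frac{r}{1} \sim \frac{0}{1}$, which by (\ref{EqDefFractionsEquivalenceRelComm}) holds if and only if there exists $s\in S$ with $r\cdot 1\cdot s = 0\cdot 1\cdot s$, i.e.\ $rs = 0$. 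I expect the only real bookkeeping to lie in part (a), specifically in the well-definedness of $f$ and the verification of its ring-homomorphism property, where one must repeatedly invoke that the inverses of elements of $\alpha(S)$ commute with $\alpha(R)$; everything else is essentially forced by the definitions.
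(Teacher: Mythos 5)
Your proof is a correct elementary verification of the standard commutative fraction construction; the paper recalls Proposition \ref{PLocalizationCommutative} without proof as classical material (citing Lam's textbook), so your argument simply supplies the omitted bookkeeping, and it does so essentially in the canonical way. One small imprecision is worth fixing: for transitivity of $\sim$, multiplying only the two auxiliary witnesses $s$ and $s'$ does not suffice once $R$ has zero divisors. If $(r_1,s_1)\sim(r_2,s_2)$ via $s$ and $(r_2,s_2)\sim(r_3,s_3)$ via $s'$, the witness establishing $(r_1,s_1)\sim(r_3,s_3)$ must be $s_2 s s'$: chaining the two hypotheses gives $r_1 s_3\,(s_2 s s') = r_3 s_1\,(s_2 s s')$, whereas $r_1 s_3\, s s' = r_3 s_1\, s s'$ need not hold. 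The middle denominator $s_2$ is essential, and multiplicative closure of $S$ is what guarantees $s_2 s s'\in S$. The rest of the argument, in particular the commutativity observation you use to cancel $\alpha(s)^{-1}$, $\alpha(s_1)^{-1}$, $\alpha(s_2)^{-1}$ in the well-definedness check for $f$, and the derivation of uniqueness from part (b), is correct as written.
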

	\noindent We shall give a more categorical description in the next section.\\
	
	\noindent \textbf{Remarks} (which will only be used in Section
	\ref{SecCommutativelyLocalizedStarProducts}):
	\begin{enumerate}
		\item Recall the well-known \emph{localization of any $R$-module $M$}, see
		e.g.~\cite[p.397]{Jac80}, which is a module $M_S$ with respect to the localized algebra $R_S$. It is naturally
		isomorphic to $R_S\otimes_R M$, see e.g.~\cite[p.398, Prop.7.6]{Jac80}.
		\item Let $R,R'$ be commutative associative unital
		$K$-algebras, and $S\subset R$, $S'\subset R'$ multiplicative subsets, respectively.
		Then it is straight-forward to see that $S\otimes_K S'=\{s\otimes_K s'\in 
		R\otimes_K R'~|~s\in S; s'\in S'\}$ is a multiplicative subset of the $K$-algebra $R\otimes_K R'$ and that the tensor product of the numerator morphisms
		$\eta_{(R,S)}\otimes_K\eta_{(R',S')}:R\otimes_K R'\to
		R_S\otimes_K R'_{S'}$ induces a natural isomorphism
		of unital $K$-algebras
		\begin{equation}
		\label{EqCompLocalizationOfTensorProducts}
		  (R\otimes_K R')_{S\otimes_K S'} ~\stackrel{\sim}{\longrightarrow}~
		  R_S\otimes_K R'_{S'}.
		\end{equation}
	\end{enumerate}

	\subsubsection{Noncommutative Localization: General Construction}
	
	Let $R$ be an associative unital $K$-algebra which is not necessarily commutative. Again, we call 
	$S\subset R$ a multiplicative subset if for all $s,s'\in S$ we have $ss'\in S$ and $1_R=1\in S$. As above, let $U(R)\subset R$ denote the multiplicative subset (which is even a group) of invertible elements of $R$.
	\\
	Let $K \mathbf{Alg}$ be the category of all associative unital
	$K$-algebras. 
	Moreover, let $K \mathbf{AlgMS}$ be the category of all pairs $(R,S)$ of associative unital $K$-algebras $R$
	with a multiplicative subset $S\subset R$ where the morphisms
	$(R,S)\to (R',S')$ are morphisms of unital $K$-algebras
	$R\to R'$ mapping $S$ into $S'$. Since any morphism of unital $K$-algebras maps the group of invertible elements in the group of invertible elements there is an obvious functor $\mathcal{U}:K \mathbf{Alg}\to K \mathbf{AlgMS}$ given on
	objects by $\mathcal{U}(R)=\big(R,U(R)\big)$.\\
	For \emph{commutative $K$-algebras}, the above localization description in Proposition \ref{PLocalizationCommutative}, 
	$a.$, gives rise to a functor 
	$\mathcal{L}:K \mathbf{AlgMS}\to K \mathbf{Alg}$ associating
	to each pair $(R,S)$ the quotient ring $R_S$, and it is not hard to see that it is a \emph{left adjoint} of the functor $\mathcal{U}$, see e.g.~\cite[p.79, Ch.IV]{Mac98} for definitions: the unit of the adjunction gives back the
	canonical numerator morphism $\eta$, and the counit is an isomorphism since localization w.r.t.~the group of all invertible elements is isomorphic to the original algebra.
	
	In the general noncommutative situation such a localization
	functor $\mathcal{L}:K \mathbf{AlgMS}\to K \mathbf{Alg}$
	does also always exist, see e.g.~\cite[Prop.(9.2), p.289]{Lam99} for a proof. We present it in the following categorical form:
	\begin{prop}\label{PGeneralLocalization}
		There is an adjunction of functors
		\[
		K\mathbf{AlgMS}~~\begin{array}[c]{c}
			\underrightarrow{~~~~~\mathcal{L}~~~~~} \\
			\overleftarrow{~~~~~\mathcal{U}~~~~~}
		\end{array}~~
		K\mathbf{Alg}
		\]
		where $\mathcal{L}$ is the left adjoint to the above functor
		$\mathcal{U}$
		such that each component $\eta_{(R,S)}$ of the unit 
		$\eta:I_{K \mathbf{AlgMS}}\naturalto \mathcal{U}\mathcal{L}$ 
		of the adjunction
		satisfies the universal property $a.$ of
		the previous Proposition \ref{PLocalizationCommutative}
		in the general noncommutative case.
		We refer to $\mathcal{L}$ as a \textbf{localization functor}.\\
		For a given $(R,S)$ in $K\mathbf{AlgMS}$ we denote
		by $R_S$ the $K$-algebra $\mathcal{L}(R,S)$ given by 
		the functor $\mathcal{L}$, and by 
		$\eta_{(R,S)}:R\to R_S$ the component of the unit of the adjunction. Then $\eta_{(R,U(R))}:R\to R_{U(R)}$ is an
		isomorphism, the inverse being the component
		$\epsilon_R$ of the counit 
		$\epsilon:\mathcal{L}\mathcal{U}\naturalto
		I_{K\mathbf{Alg}}$ of the adjunction.
		Moreover, every element of the $K$-algebra $R_S$ is a finite
		sum of products of the form ($\eta=\eta_{(R,S)}$)
		\begin{equation}
		\label{EqCompLocAlgebraGeneralElement}
			\eta(r_1)\big(\eta(s_1)\big)^{-1}\cdots 
			\eta(r_N)\big(\eta(s_N)\big)^{-1}
		\end{equation}
		(which may be called `multifractions') with $r_1,\ldots,r_N\in R$ and $s_1,\ldots,s_N\in S$
		(note that $r_1$ or $s_N$ may be equal to the unit element of $R$).
	\end{prop}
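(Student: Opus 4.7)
The plan is to construct $R_S$ explicitly by generators and relations and then verify the universal property, from which the adjunction, the statement about $S=U(R)$, and the multifraction form of elements all follow. First I would build $R_S$ as follows. Let $F$ be the free associative unital $K$-algebra on the disjoint union of two copies of sets, with generators $\iota(r)$ for every $r\in R$ and $\sigma(s)$ for every $s\in S$. Let $I\subset F$ be the two-sided ideal generated by: (i) the elements $\iota(r+r')-\iota(r)-\iota(r')$, $\iota(kr)-k\iota(r)$, $\iota(rr')-\iota(r)\iota(r')$ and $\iota(1_R)-1_F$, which force $\iota$ to descend to a unital $K$-algebra homomorphism on the quotient; and (ii) the elements $\iota(s)\sigma(s)-1_F$ and $\sigma(s)\iota(s)-1_F$ for each $s\in S$, which force $\sigma(s)$ to become the two-sided inverse of $\iota(s)$. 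Define $R_S:=F/I$ and $\eta_{(R,S)}(r):=[\iota(r)]$; by construction $\eta_{(R,S)}$ is an $S$-inverting unital $K$-algebra morphism, hence a morphism in $K\mathbf{AlgMS}$ from $(R,S)$ to $\mathcal{U}(R_S)$.

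Next I would verify the universal property. Given any $S$-inverting morphism $\alpha:R\to R'$, define $\tilde{\alpha}:F\to R'$ on generators by $\tilde{\alpha}(\iota(r)):=\alpha(r)$ and $\tilde{\alpha}(\sigma(s)):=\alpha(s)^{-1}$, the latter being well-defined precisely because $\alpha(S)\subset U(R')$, and extend by the universal property of the free algebra. A direct check shows $\tilde{\alpha}$ annihilates every generator of $I$, hence factors through a unique $f:R_S\to R'$ satisfying $f\circ\eta_{(R,S)}=\alpha$. Uniqueness follows because the classes $[\iota(r)]$ together with the classes $[\sigma(s)]$ generate $R_S$ as a $K$-algebra, and $f$ is forced on both families. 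Packaged naturally in the pair $(R,S)$, this gives the natural bijection $\mathrm{Hom}_{K\mathbf{Alg}}(\mathcal{L}(R,S),R')\cong \mathrm{Hom}_{K\mathbf{AlgMS}}((R,S),\mathcal{U}(R'))$, which is exactly the adjunction $\mathcal{L}\dashv \mathcal{U}$ with unit $\eta$.

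Finally, for the assertion on $S=U(R)$, the identity $R\to R$ is $U(R)$-inverting, so universality provides a morphism $R_{U(R)}\to R$ sending $[\iota(r)]\mapsto r$ and $[\sigma(u)]\mapsto u^{-1}$; checking that this map and $\eta_{(R,U(R))}$ are mutually inverse is a routine computation using the defining relations, and it identifies this inverse as the counit component $\epsilon_R$. The multifraction normal form (\ref{EqCompLocAlgebraGeneralElement}) reads off directly from the construction: every element of $F$ is a $K$-linear combination of monomials in the two families of generators, and modulo $I$ one may fuse any consecutive block of $\iota$-letters into a single $\iota(r)$ via $\iota(rr')=\iota(r)\iota(r')$ and insert $\iota(1_R)=1_F$ between consecutive $\sigma$-letters or at the endpoints, bringing each monomial into the alternating form $\eta(r_1)\eta(s_1)^{-1}\cdots\eta(r_N)\eta(s_N)^{-1}$. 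The main (modest) obstacle is the bookkeeping in the second step: one must check that the relations defining $I$ really do suffice so that $\tilde{\alpha}$ descends, and that the reduction to alternating form requires no further relations. There is no deeper difficulty, because unlike the Ore case we make no attempt here to describe the kernel of $\eta_{(R,S)}$ or to control whether $R_S$ collapses to zero.
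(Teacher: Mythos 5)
Your proposal is correct and takes essentially the same route as the paper: both construct $R_S$ by generators and relations (adjoining formal inverses for $S$), verify the universal property to get the adjunction, and read off the multifraction form from the presentation. The only cosmetic difference is that the paper uses the tensor algebra $T_K(R\oplus KS)$ on a $K$-module together with the counit kernel $\kappa(R)$ of the free/forgetful adjunction to encode the algebra relations of $R$, whereas you take the free $K$-algebra on the underlying set of $R\sqcup S$ and impose the linearity and multiplicativity relations on the $\iota$-generators explicitly; the two presentations yield canonically isomorphic quotients.
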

	The idea of the proof of \cite[Prop.(9.2), p.289]{Lam99} is as follows: (see also the PhD thesis 
	\cite[p.144]{Ara21} for details) there is a natural
	surjective morphism of unital $K$-algebras $\hat{\epsilon}_R$ from the
	free $K$-algebra generated by the $K$-module $R$,
	$T_KR$, to $R$ which provides us with a natural categorical presentation of 
	$R$ `by generators and relations': this morphism is given by the $R$-component of the counit $\hat{\epsilon}$ of the well-known adjunction 
	\[
	K\mathbf{Mod}~~\begin{array}[c]{c}
		\underrightarrow{~~~~~
		{T_K}~~~~~} \\
		\overleftarrow{~~~~~\mathcal{O}~~~~~}
	\end{array}~~
	K\mathbf{Alg}
	\] 
	where $\mathcal{O}$ is the forgetful functor and ${T_K}$ the free algebra functor. Let
	$\kappa(R)\subset T_KR$ denote the kernel of 
	$\hat{\epsilon}_R$. The next step is to add to the generating $K$-module $R$ the free $K$-module
	$KS$ with basis $S$, and to consider the two-sided ideal $\kappa(R,S)$ in
	the free algebra $T_K(R\oplus KS)$ generated by 
	$\kappa(R)$ and by the subsets 
	$\{(s,0)\otimes(0,s)-\mathbf{1}_T~|~s\in S\}$
	and $\{(0,s)\otimes (s,0)-\mathbf{1}_T~|~s\in S\}$
	of $T_K\big(R\oplus KS\big)$ where the multiplication $\otimes$ and the unit $\mathbf{1}_T$ are taken in the free
	algebra $T_K\big(R\oplus KS\big)$. The localized algebra
	$\mathcal{L}(R,S)=R_S$ is then defined by 
	$R_S=T_K\big(R\oplus KS\big)/\kappa(R,S)$, and the
	`numerator morphism' $\eta_{(R,S)}:R\to R_S$ is simply 
	the canonical injection of $R$ into $T_KR\subset T_K\big(R\oplus KS\big)$ followed by the obvious projection. It follows that for every
	$s\in S$ its image $\eta_{(R,S)}(s)$ has an inverse by
	construction. The verification that this leads to a well-defined functor $\mathcal{L}$ which is a left adjoint to
	the functor $\mathcal{U}$ is lengthy, but straight-forward. 
	
	The preceding construction shows that the
	functor $\mathcal{L}$ provides us with an abstract universal numerator map $\eta_{(R,S)}$ which is
	\emph{$S$-inverting} in the sense that every $\eta_{(R,S)}(s)$,
	$s\in S$, is invertible in $R_S$ and a natural isomorphism $\epsilon_R$
	from an algebra to its localization w.r.t.~its group of units.
	
\subsubsection{Noncommutative Localization: Ore Localization}

Although the preceding general localization construction
is always well-defined, it exhibits the following draw-backs
which show the need for a more particular localization procedure due to {\O}.~ Ore, 1931, \cite{Ore31} which we shall sketch in this Section:

\begin{itemize}	
\item The construction by generators and relations renders
	the localized algebra $R_S$ quite implicit and not always
	computable. 
\item Of course, it is easy to see that if $S$ contains $0$ then
	the localized algebra is trivial,
	$R_S\cong\{0\}$. But even for multiplicative subsets $S\subset R$
	not containing $0$ it may happen that the localized algebra
	$R_S$ is trivial as example $(9.3)$ of 
	\cite[p.289]{Lam99} shows.
	This can never happen in the commutative case since
	the equation $\frac{1}{1}=\frac{0}{1}$ is equivalent to
	the fact that $0\in S$. This shows the lack of control
	over the kernel of the `numerator morphism' $\eta_{(R,S)}$.

\item The presentation of elements of $R_S$ in terms of 
	sums of `\emph{multifractions}'
	as equation (\ref{EqCompLocAlgebraGeneralElement}) shows
	is quite clumsy, and one would prefer simple right or
	left fractions.
\end{itemize}
	
	In order to motivate the particular conditions on $S$ in the following definition we look at the multifractions which span
	the localized $K$-algebra $R_S$, see eqn (\ref{EqCompLocAlgebraGeneralElement}): it may be desirable to transform a multifraction in a simple right fraction, and a partial step may consist in transforming a left fraction
	$\big(\eta(s)\big)^{-1}\eta(r)$ (with $r\in R$ and $s\in S$) directly into a right fraction $\eta(r')
	\big(\eta(s')\big)^{-1}$ (for some $r'\in R$ and $s'\in S$) which implies that every multifraction is equal to a right fraction by applying this step a finite number of times. This above condition implies the equation $\eta(rs')=\eta(sr')$ and thus motivates the stronger condition that for any pair $(r,s)\in R\times S$ there is a pair $(r',s')\in R\times S$ such that $rs'=sr'$, and this is the well-known \emph{right Ore condition}:

	%
	%
	
	\begin{defi}\label{defi: bigdef}
		Let $R$ be an associative unital $K$-algebra, 
		and $S\subset R$ be a multiplicative subset.	
		\begin{itemize}
			\item[i.]A $K$-algebra $\check{R}_S$ equipped with a 
			morphism of  unital $K$-algebras $\check{\eta}_{(R,S)}=\check{\eta}:R\to \check{R}_S$ is said to be a \textbf{right $K$-algebra of fractions of $(R,S)$} if the following
			conditions are satisfied:
			\begin{itemize}
				\item[a.] $\check{\eta}_{(R,S)}$ is $S$-inverting,
				\item[b.] Every element of  $\check{R}_S$ is of the form 
				$\check{\eta}(r)\big(\check{\eta}(s)\big)^{-1} $ for some 
				$r\in R $ and $s\in S$;
				\item[c.] $ \ker(\check{\eta})
				=\{r\in R~|~rs=0, \mathrm{\:for\:some\:}s\in S\}=:I_{(R,S)}=:I$. 
			\end{itemize}
			\item[ii.] $S$ is called a \textbf{right denominator set}
			if it satisfies the following two properties:
			\begin{itemize}
				\item[a.] For all $r\in R$ and $s\in S$ we have 
				$rS\cap sR\neq\emptyset$ ($S$ \textbf{right permutable} or \textbf{right Ore set}), i.e.~there are $r'\in R$
				and $s'\in S$ such that $rs'=sr'$.
				\item[b.] For all $r\in R$ and for all $s'\in S$: if $s'r=0$
				then there is $s\in S$ such that $rs=0$ 
				($S$ \textbf{right reversible}). 
			\end{itemize}
		\end{itemize}
	\end{defi}
	\noindent In case $R$ is commutative every multiplicative subset is
	a right denominator set. Moreover the group of all invertible
	elements $U(R)$ of any unital $K$-algebra is obviously
	a right denominator set.
	
	\noindent The next theorem shows that such a right algebra of fractions
	exists iff $S$ is a right denominator set, see also
	\cite[Thm (10.6), p.300]{Lam99}:

	\begin{theorem} \label{TLocalizationForRightDenominatorSets}
		Let $R$ be a unital $K$-algebra and $S\subset R$ be
		a multiplicative subset. Then the following is true:
		\begin{enumerate}
			\item 
			The $K$-algebra $R$ has a right $K$-algebra of fractions $\check{R}_S$ with respect to the multiplicative subset $S$ if and only if $S$ is a right denominator set. 
			\item If this is the case each such pair $(\check{R}_S,\check{\eta})$
			is universal in the sense of diagram (\ref{EqUniversalityOfNumeratorMap}) and each $\check{R}_S$ is isomorphic
			to the canonical localized algebra $R_S$ of
			Proposition \ref{PGeneralLocalization}.
			\item Each $\check{R}_S$ is isomorphic to the quotient set 
			$RS^{-1}:=(R\times S)/\sim$ with respect to
			the following binary relation $\sim$ on $R\times S$
			\begin{equation}\label{EqDefRightFractionsEquivalenceRel}
				(r_1,s_1)\sim(r_2,s_2) ~~ \Leftrightarrow~~\exists b_1,b_2\in R~
				\mathrm{such~that}~s_1b_1=s_2b_2\in S\mathrm{\:and\:} r_1b_1=r_2b_2 \in R 
			\end{equation}
			which is an equivalence relation generalizing relation (\ref{EqDefFractionsEquivalenceRelComm}). $RS^{-1}$ carries a canonical unital
			$K$-algebra structure, i.e. addition and
			multiplication on equivalence classes
			$r_1s_1^{-1}$ and $r_2s_2^{-1}$ (with $r_1,r_2\in R$ and
			$s_1,s_2\in S$) is given by
			\begin{equation}
				\label{EqAdditionMultiplicationOnEqClassesRD}
				r_1s_1^{-1}+r_2s_2^{-1}=(r_1c_1+r_2c_2)s^{-1},~~~\mathrm{and}~~~(r_1s_1^{-1})(r_2s_2^{-1})
				=(r_1r')(s_2s')^{-1}
			\end{equation}
			where we have written $s_1c_1=s_2c_2=s\in S$ (with $c_1\in S$ and $c_2\in R$) and $r_2s'=s_1r'$ (with
			$s'\in S$ and $r'\in R$) using the right Ore property.
			The numerator morphism $\eta_I:R\to RS^{-1}$ is
			given by $\eta_I(r)=r1^{-1}$ for all $r\in R$.
		\end{enumerate}
	\end{theorem}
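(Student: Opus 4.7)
The plan is to prove the three parts of the theorem together, since they share a common construction, starting with the easy direction of (1).

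\emph{The ``only if'' direction of (1).} Assume a right algebra of fractions $(\check{R}_S,\check{\eta})$ exists. For right reversibility, if $s'r=0$ with $s'\in S$ then $\check{\eta}(s')\check{\eta}(r)=0$, and invertibility of $\check{\eta}(s')$ forces $\check{\eta}(r)=0$; property (c) then produces $s\in S$ with $rs=0$. For right permutability, given $(r,s)\in R\times S$, condition (b) lets us write $\check{\eta}(s)^{-1}\check{\eta}(r)=\check{\eta}(r')\check{\eta}(s')^{-1}$, hence $rs'-sr'\in\ker\check{\eta}$, and (c) furnishes $s''\in S$ with $r(s's'')=s(r's'')$, the required Ore equation.

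\emph{Construction of $RS^{-1}$ and the ``if'' direction.} Assuming $S$ is a right denominator set, I define $\sim$ on $R\times S$ by (\ref{EqDefRightFractionsEquivalenceRel}). Reflexivity and symmetry are trivial, but transitivity already requires the full denominator hypothesis: given $(r_1,s_1)\sim(r_2,s_2)\sim(r_3,s_3)$ with witnesses $b_1,b_2$ and $c_2,c_3$, I would apply right permutability to the pair $(s_2c_2,s_2b_2)\in R\times S$ to get $s'\in S$ and $r'\in R$ with $s_2c_2s'=s_2b_2r'$, then invoke right reversibility to absorb the left factor $s_2$ and produce $s''\in S$ with $c_2s's''=b_2r's''$; the elements $b_1r's''$ and $c_3s's''$ then witness $(r_1,s_1)\sim(r_3,s_3)$. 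I then equip the quotient with (\ref{EqAdditionMultiplicationOnEqClassesRD}); the recipe for sum and product rests on bringing two fractions to a common right denominator by iterated right permutability, while well-definedness on classes again uses right reversibility to kill stray annihilators. Associativity, distributivity, and unitality are then routine. The map $\eta_I(r):=r1^{-1}$ is a unital $K$-algebra morphism with $\eta_I(s)^{-1}=1s^{-1}$, every class is visibly of the form $\eta_I(r)\eta_I(s)^{-1}$, and $r1^{-1}=01^{-1}$ unfolds through (\ref{EqDefRightFractionsEquivalenceRel}) to $rb=0$ for some $b\in S$, which gives (c); this establishes (1)(``$\Leftarrow$'') and (3).

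\emph{Part (2): universality and identification with $R_S$.} Given any $S$-inverting morphism $\alpha:R\to R'$ of unital $K$-algebras, I define $f:RS^{-1}\to R'$ by $f(rs^{-1}):=\alpha(r)\alpha(s)^{-1}$; well-definedness on classes is immediate from (\ref{EqDefRightFractionsEquivalenceRel}) after applying $\alpha$, multiplicativity and additivity rest on the same common-denominator trick used to define the operations, and uniqueness of $f$ follows from property (b). Thus $RS^{-1}$ satisfies the universal property of Proposition \ref{PLocalizationCommutative}(a) in the noncommutative setting; comparing with Proposition \ref{PGeneralLocalization} shows that $\check{R}_S$ and $R_S$ solve the same universal problem and are therefore canonically isomorphic.

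\emph{Main obstacle.} The heart of the proof is the combinatorial bookkeeping at the Ore level: transitivity of $\sim$, the existence of common right denominators for the sum, and well-definedness of both operations on classes. Each of these steps interleaves the right permutability axiom (to move elements past one another) with right reversibility (to cancel leftover annihilators on the left), and it is exactly this interaction that forces the precise form of Definition \ref{defi: bigdef}(ii). Once these bookkeeping steps are in place, the algebra axioms, the properties of $\eta_I$, and the universal property all follow by routine verification.
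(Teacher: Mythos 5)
Your proposal is correct, and it executes the route the paper calls the ``steep and thorny way,'' namely the direct Ore construction on $(R\times S)/\sim$. The paper only sketches this and refers to Lam's book for the details, while it spends more words on a second, structurally different argument attributed to Passman: first factor out $I_{(R,S)}$, then realize the algebra of right fractions inside the endomorphism ring of the injective hull of the right $\overline{R}$-module $\overline{R}$, so that the algebra axioms and the identification of $\ker\check\eta$ come for free from an ambient associative algebra rather than from by-hand verification on equivalence classes. Your version trades that structural shortcut for the explicit combinatorics.

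The substance you supply is sound. In particular your transitivity argument is exactly the nontrivial point: from $s_1b_1=s_2b_2\in S$, $r_1b_1=r_2b_2$ and $s_2c_2=s_3c_3\in S$, $r_2c_2=r_3c_3$, right permutability for the pair $(s_2c_2,\,s_2b_2)\in R\times S$ yields $r'\in R$, $s'\in S$ with $s_2c_2s'=s_2b_2r'$; right reversibility then cancels the stray left factor $s_2$ and gives $s''\in S$ with $c_2s's''=b_2r's''$, and one checks directly that $b_1r's''$, $c_3s's''$ witness $(r_1,s_1)\sim(r_3,s_3)$ with common element $s_2c_2s's''\in S$. The ``only if'' direction and the computation of $\ker\eta_I$ are also correct, and your well-definedness argument for the universal map $f(rs^{-1})=\alpha(r)\alpha(s)^{-1}$ works because $\alpha(s_1b_1)\in U(R')$ forces $\alpha(b_1)$ to be invertible. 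The one place you wave your hands --- ``associativity, distributivity, and unitality are then routine'' and well-definedness of the operations on classes --- is precisely what the paper flags as ``extremely tedious''; that is an honest elision for a sketch, but a full write-up would need to carry it out or, as the paper suggests, sidestep it via the injective-hull embedding.
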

	\noindent 
	For a proof, see e.g.~\cite[p.244, Thm.~25.3]{Pas91}
	\footnote{We are indebted to A.~Eduque for having pointed out this reference to us.} or the PhD thesis
	\cite[p.146]{Ara21}.\\
	We shortly describe the \textbf{idea of the proof}: whereas
	in part 1. the verification of the implication ``$ (i.)~ \Longrightarrow ~(ii.) $'' in Definition \ref{defi: bigdef} is straight-forward, the
	converse implication ``$ (i.)~ \Longleftarrow ~(ii.) $''
	of Definition \ref{defi: bigdef} is much more involved:
	the traditional `steep and thorny way' (originally set up by {\O}ystein Ore, \cite{Ore31}) consists of
	a concrete construction of the $K$-algebra $RS^{-1}$ upon
	using the above relation (\ref{EqDefRightFractionsEquivalenceRel}) 
	--which reflects the idea of creating `common denominators'-- and defining and verifying the canonical
	$K$-algebra structure (\ref{EqAdditionMultiplicationOnEqClassesRD}) on the quotient set $R\times S/\sim$ by hand
	which is elementary, but extremely tedious 
	(even the fact that the above relation 
	(\ref{EqDefRightFractionsEquivalenceRel}) is transitive requires some work). We refer to Lam's book \cite[p.300-302]{Lam99} for some of the details. \\
	There is a different more elaborate way to prove
	part 1. and the rest of the theorem (see \cite[p.244, Thm.~25.3]{Pas91} and \cite[Remark (10.13), p.302, 
	and footnote 70]{Lam99}): it is instructive to
	look first at the equivalence relations created by an arbitrary
	$S$-inverting morphism of unital $K$-algebras $\alpha:R\to R'$, the classes being defined by the fibres of the map
	$p_\alpha:R\times S\to R'$ given by $p_\alpha(r,s)=
	\alpha(r)\big(\alpha(s)\big)^{-1}$, which is already
	very close to relation (\ref{EqDefRightFractionsEquivalenceRel}): thanks to the
	fact that
	the right fractions $\alpha(r)\big(\alpha(s)\big)^{-1}$
	form a $K$-subalgebra of $R'$ (here the Ore axiom is
	needed) it creates an
	algebra structure on the quotient set isomorphic to
	the aforementioned subalgebra of $R'$ whence there is no need of tedious
	verifications of identities of algebraic structures. The central point then is to construct
	a unital $K$-algebra $R'$ and an $S$-inverting morphism
	$\alpha:R\to R'$ whose kernel is minimal, hence \emph{equal to
		$I_{(R,S)}$} which finally shows that the above algebra
	$RS^{-1}$ exists and does everything it should do.
	For this construction, the following trick is used:
	after `regularizing' $R$ by passing to the factor algebra $\overline{R}=R/I_{(R,S)}$ (where the image multiplicative set $\overline{S}$ does no longer contain right or left divisors of zero)
	one looks at
	the endomorphism algebra of the \textbf{injective hull $E$ of the
		right $\overline{R}$-module $\overline{R}$}. Every left
	multiplication with elements of $\overline{R}$ can nonuniquely be extended to $E$, and
	the extensions of left multiplications with elements
	of $\overline{S}$ turn out to be invertible (here the Ore axiom is needed). $R'$ will 
	then be given by the subalgebra generated by all extensions
	of left multiplications and the inverses of left multiplications with elements of $\overline{S}$ modulo the two-sided
	ideal of all $\overline{R}$-linear maps $E\to E$ vanishing
	on $\overline{R}$: this will resolve the ambiguity of extension, and $\overline{R}$ injects in $R'$, the injection being $\overline{S}$-inverting.

	Moreover, in any noncommutative domain (no nontrivial zero divisors) which is \emph{right Noetherian}
	(i.e.~where every ascending chain of right ideals stabilizes) the subset of nonzero
	elements is always a right denominator set (see \cite[p.304, Cor.~(10.23)]{Lam99} or \cite[p.14, Beisp.~2.3 b)]{BGR73}).
	In particular, this applies to every universal enveloping algebra over a finite-dimensional Lie algebra 
	(over a field $\mathbb{K}$ of characteristic zero) and for the Weyl-algebra generated by
	$\mathbb{K}^{2n}$. On the other hand, for the free algebra 
	$R=T_\mathbb{K}V$ generated by a vector space $V$ of dimension $\geq 2$ over a field $\mathbb{K}$ of characteristic zero
	(which is well-known to be isomorphic to the
	universal enveloping algebra of the free Lie algebra generated
	by $V$) the multiplicative subset of all nonzero elements is neither
	a right nor a left denominator set: for two linearly independent elements $v$ and $w$ in $V$ we clearly have
	$vR\cap wR=\{0\}$. Hence the above statement about
	universal enveloping algebras does no longer apply to
	infinite-dimensional Lie algebras like the free Lie algebra
	generated by $V$. Moreover inverse images of
	right denominator subsets are in general no right denominator
	subsets as the example of the natural homomorphism
	$T_KV\to S_KV$ of the free to the free commutative algebra generated by $V$ shows: as $S_KV$ is a commutative domain, the subset $S=S_KV\setminus \{0\}$ is a right denominator set whereas its inverse image $T_KV\setminus \{0\}$ is not. On the other hand every homomorphic image of
	a right (or left) Ore set clearly is again a right (or left)
	Ore set. However, there may be subsets of right (or left)
	denominator sets which are no longer right (or left) denominator sets, as we shall see later in Section
	\ref{SecNonOreExample}.

	\subsection{Star products}

	We want to recall some basic definitions and facts about the deformation  quantization of smooth manifolds and star products, see \cite{BFFLS78}, \cite{Wal07} for more information. 
	
	Given a  $\mathbb{K}$-vector space $V$ we denote by $V\ph$ the $\mathbb{K}\ph$-module of formal power series. 
	An element of $v \in V\ph$ can be written uniquely as $v = \sum_{i=0}^\infty v_i \lambda^i$ with $v_i \in V$, and for a 
	given $v\in V\ph$ and $i\in\mathbb{N}$ we shall always write $v_i\in V$ for the $i$th component of $v$ as a formal power series. We also note that for two $\mathbb{K}$-vector spaces $V,W$ we have 
	$\Hom_{\mathbb{K}\ph}(V\ph,W\ph) \cong \Hom_{\mathbb{K}}(V,W)\ph$. \\
	In the following considerations of differential geometry
	we set $\mathbb{K}=\mathbb{R}$ or $\mathbb{K}=\mathbb{C}$,
	and for any smooth differentiable manifold $X$ we
	write $\CCinf(X)=\mathcal{C}^\infty(X,\mathbb{K})$.

	\begin{defi}[Star product]\label{DefStarProducts}
		A (formal)  star product $\star$ on a manifold $X$ is a $\mathbb{K}\ph$-bilinear operation $\CCinf(X)\ph \times \CCinf(X)\ph \rightarrow \CCinf(X)\ph$  --which can always be written as a formal series $f \star g = \sum_{k=0}^\infty \lambda^kC_k(f,g)$ for all $f,g\in\CCinf(X)$--
		satisfying the following properties for all $f,g, \in \CCinf(X)$ (see section \ref{SubSubSec Commutative algebra of smooth function algebras} for definitions and
		notations):
		\begin{itemize}
			\item $\sum_{l=0}^k \big(C_l\circ_1 C_{k-l} 
			-C_l\circ_2 C_{k-l}\big)=0, ~\forall k\geq0. $,
			\item $C_0(f,g) = fg$,
			\item $ 1 \star f = f \star 1  = f$,	
		\end{itemize}
		with $\mathbb{K}$-bilinear operators $C_k: \CCinf(X) \times \CCinf(X) \to \CCinf(X)$ which we
		always assume to be bidifferential operators. 
	\end{defi}
	
	\begin{rem}
		It follows from the first equation of Definition \ref{DefStarProducts} that $\star$ is associative.
	\end{rem}
	\noindent Note that every star product $\star$ can be 
	analytically localized to an associative star product $\star_U$ defined
	on $\mathcal{C}^\infty(U)[[\lambda]]$ by the localization
	of all the bidifferential operators $C_k$ to $C_{kU}$
	(see section \ref{SubSubSec Commutative algebra of smooth function algebras} for more details).
	
	The following well-known explicit star product $\star_s$ on $\mathbb{R}^2$ with coordinates $(x,p)$ will be used
	in the sequel:
	\begin{equation}\label{EqDefStandardStarProduct}
		f\star_s g =\sum_{k=0}^\infty \frac{\lambda^k}{k!}
		\frac{\partial^k f}{\partial p^k}
		\frac{\partial^k g}{\partial x^k}
	\end{equation}
	for any two functions $f,g\in \CCinf(\mathbb{R}^2)$.
	In the physics literature $\lambda$ corresponds to
	$(-\mathbf{i}\hbar)$. Moreover, for functions polynomial
	in the `momenta' $p$ it is obvious that the above series
	converges, and for $\lambda=1$ one obtains the usual formula
	for the symbol calculus of multiplication of differential operators on the
	real line (where partial derivatives are always brought to the right and replaced by the new variable $p$).

	The star commutator for $a,b \in \CCinf(X)\ph$ is defined by 
	$[a,b]_\star = a \star b - b\star a$.
	As usual, the star commutator satisfies the Leibniz-identity, i.e. $[a,b \star c]_\star=[a,b]_\star \star c + b \star [a,c]_\star$, and the Jacobi-identity and thus defines the
	structure of a non-commutative Poisson algebra. Also the adjoint action is a derivation of $\CCinf(X)\ph$ for all $a \in \CCinf(X)\ph$.\\
	From this it can easily be deduced that the first order term of a star product defines a Poisson bracket as follows 
	\begin{equation}
		\{f,g\} =\frac{1}{2} ( C_1(f,g)-C_1(g,f)) = \frac{1}{2\lambda} [f,g] |_{\lambda=0} \text{ for } f,g \in \CCinf(X).
	\end{equation}
	For $\CCinf(X)$ it is well-known that every Poisson bracket comes from a unique \emph{Poisson structure} $\pi$ which is a smooth
	bivector field $\pi$, i.e.~a smooth section in 
	$\Lambda^2TX$ satisfying the identity $[\pi,\pi]_S=0$
	where $[~,~]_S$ denotes the Schouten bracket, see 
	e.g.~\cite[p.84-87]{Wal07}: the relation is
	$\{f,g\}=\pi(df,dg)$.
	The very difficult converse problem	whether the Poisson bracket associated to any given Poisson structure  $\pi$
	arises as the first order commutator of a star product
	had been solved by M.~Kontsevich, see \cite{Kon03}.\\
	The following considerations will only be used in
	Section \ref{SecNonOreExample}: two star products $\star$, $\star'$ are called \emph{equivalent} if there exists a formal power series of differential operators 
	$T= \operatorname{id} + \sum_{k=1}^\infty\lambda^k T_k$, with $T(1) =1$ such that $T(f) \star T(g) = T(f \star' g) $
	for all $f,g\in \CCinf(X)\ph$. The operator $T$ in the above definition is always invertible and indeed, given a star product $\star$,
	$f \star' g := T^{-1}(T(f) \star T(g))$ always gives a new equivalent star product. Two equivalent star products clearly give rise to the same Poisson bracket.\\
	For the star product (\ref{EqDefStandardStarProduct})
	there is the following well-known transformation
	$T=e^{-\lambda \Delta}$ with 
	$\Delta(f)=\partial^2f/\partial x\partial p$: together with the $\mathbb{K}$-linear (and not $\mathbb{K}[[\lambda]]$-linear) involution
	$L:A[[\lambda]]\to A[[\lambda]]$ given by
	$L\left(\sum_{r=0}^\infty\lambda^rf_r\right)=
	\sum_{r=0}^\infty(-\lambda)^rf_r$ we get --setting 
	$V=L\circ T$
	\begin{equation} \label{EqDefNeumaierModified}
	    \big(V(f)\big)\star_s 
	    \big(V(g)\big)
	    =V\big(g\star_s f\big)
	\end{equation}
	which can easily be checked on exponential functions
	$(x,p) \mapsto e^{ax+bp}$ with $a,b\in \mathbb{K}$.
	

	

	\section{Noncommutative localization of smooth star products on open subsets}
	\label{SecNonComLocOpenSets}
	
	Let $(X,\pi)$ be a Poisson manifold, let $\star=
	\sum_{k=0}^\infty \lambda^k C_k$ be a star product on
	$(X,\pi)$, and let $\Omega\subset X$ be a fixed open set.
	We set $K=\mathbb{K}[[\lambda]]$, and consider the $K$-algebra $\big(R=\mathcal{C}^\infty(X)[[\lambda]],\star\big)$.
	Moreover, since the star product $\star$ only involves
	bidifferential operators, it restricts to a star product $\star_\Omega$ on formal 
	power-series $\phi\in R_\Omega:=
	\mathcal{C}^\infty(\Omega,\mathbb{K})[[\lambda]]$ such that
	$\big(R_\Omega,\star_\Omega\big)$ is also a $K$-algebra.
	It follows that the restriction map 
	$\eta_\Omega=\eta:R\to R_\Omega:f\mapsto f|_\Omega$ is a morphism
	of unital $K$-algebras. We define the following subsets
	$S_\Omega\subset \mathcal{C}^\infty(X,\mathbb{K})$ and $S\subset R$:  
	\begin{equation}
		S_\Omega:=\left\{ g_0\in \mathcal{C}^\infty(X,\mathbb{K})~|~\forall~x\in\Omega:~g_0(x)\neq 0 \right\}
		~~\mathrm{and}~~S:=S_\Omega+\lambda R.
	\end{equation}
	Clearly, $S_\Omega$ is a commutative multiplicative subset of $\mathcal{C}^\infty(X,\mathbb{K})$. Since the constant function $1$ is in $S$, and for any
	$g,h\in S$ we have $(g\star h)_0(x)=g_0(x)h_0(x)\neq 0$ (for all
	$x\in \Omega$) it follows that \emph{$S$ is a multiplicative subset of
		the unital $K$-algebra $R$}.\\
	We can now consider the noncommutative localization of
	$R$ with respect to $S$ and compare it with the
	unital $K$-algebra $R_\Omega$:
	
	\begin{theorem}\label{TLocalizationEquivalenceOpenSubset}
		Using the previously fixed notations we get
		for any open set $\Omega\subset X$:
		\begin{enumerate}
			\item $(R_\Omega,\star_\Omega)$ equipped with the restriction morphism
			$\eta$ consitutes a right $K$-algebra of fractions for $(R,S)$.
			\item As an immediate consequence we have that $S$ is a right denominator set.
			\item \textbf{This implies in particular that the algebraic localization $RS^{-1}$
				of $R$ with respect to $S$ is isomorphic to 
				the concrete localization $R_\Omega$ as unital $K$-algebras.}
		\end{enumerate}	
	\end{theorem}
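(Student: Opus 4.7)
The plan is to verify the three defining conditions (a), (b), (c) of Definition \ref{defi: bigdef}, item i., for the pair $\big(R_\Omega,\star_\Omega\big)$ equipped with the restriction morphism $\eta$; parts 2. and 3. of the theorem will then follow automatically from Theorem \ref{TLocalizationForRightDenominatorSets}.

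For condition (a) I use the fact that for $s=\sum_k\lambda^k s_k\in S$ the zeroth term $s_0|_\Omega$ is pointwise nowhere zero on $\Omega$ and therefore has a pointwise inverse in $\CCinf(\Omega)$. A standard formal-deformation argument (factor out $s_0|_\Omega$ and invert $1+\lambda\tau$ by the $\star_\Omega$-geometric series $\sum_{k\geq 0}(-\lambda\tau)^{\star_\Omega k}$, which converges $\lambda$-adically) then produces the $\star_\Omega$-inverse of $\eta(s)$ in $R_\Omega$. For condition (c), the inclusion $\{f\in R\,:\,f\star s=0\text{ for some }s\in S\}\subset\ker(\eta)$ is immediate from (a). For the converse, given $f\in R$ with $\eta(f)=0$, I would exhibit a single function $s_0\in\CCinf(X,\mathbb{K})$ which is strictly positive on $\Omega$ and whose partial derivatives of all orders vanish identically on $X\setminus\Omega$; such an $s_0$ is obtained by a classical Whitney-type construction (a countable locally finite cover of $\Omega$ by open sets with compact closure in $\Omega$, a subordinate partition of unity, and coefficients chosen via the seminorms \eqref{EqDefSeminorms} to enforce Fr\'{e}chet convergence). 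Then $s_0\in S$, and in the expansion of $f\star s_0$ every bidifferential term $C_i(f_j,s_0)$ vanishes pointwise on $X$: on $\Omega$ because $f_j$ is identically (hence flatly) zero there, and on $X\setminus\Omega$ because $s_0$ is flatly zero there.

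Part (b) is the hard part. Given $\phi=\sum_k\lambda^k\phi_k\in R_\Omega$, I look for $s\in S_\Omega\subset S$ (a zeroth-order element) and $r\in R$ with $\eta(r)=\phi\star_\Omega\eta(s)$. The key tool will be \emph{Tougeron's fonction aplatisseur} from \cite{Tou65,Tou72}: for any countable family of smooth functions on $\Omega$ there exists an $s\in\CCinf(X,\mathbb{K})$, strictly positive on $\Omega$ and flat zero on $X\setminus\Omega$, such that the product of $s$ with each member of the family extends to a smooth function on all of $X$. I would apply this to the countable family consisting of all partial derivatives $\partial^\alpha\phi_k$ in a countable atlas on $X$, reassembled globally by a partition of unity. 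An induction on $|\beta|$ based on the Leibniz rule applied to the already-smooth extensions of $s\cdot\partial^\alpha\phi_k$ then shows that every product $(\partial^\beta s)(\partial^\alpha\phi_k)$ extends smoothly to $X$. Consequently, each coefficient $\sum_{i+j=n}C_{i,\Omega}(\phi_j,s|_\Omega)$ of $\phi\star_\Omega s|_\Omega$, being a locally finite sum of such products with smooth coefficients, extends to some $r_n\in\CCinf(X,\mathbb{K})$, and $r:=\sum_n\lambda^n r_n\in R$ does the job.

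The principal obstacle is the construction of the aplatisseur $s$ itself: a \textbf{single} $s$ must simultaneously tame the behaviour near $\partial\Omega$ of an \textbf{entire countable family} of derivatives of the $\phi_k$. This is exactly the step that forces, as the authors emphasize in the introduction, explicit use of the Fr\'{e}chet seminorms \eqref{EqDefSeminorms}: one has to choose coefficients decaying fast enough in a Whitney-type series defining $s$ to beat every seminorm in the family, so that $s$ and all the products described above remain smooth on the whole of $X$.
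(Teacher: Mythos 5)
Your proof follows the same broad strategy as the paper's --- verify conditions (a), (b), (c) of Definition \ref{defi: bigdef}(i.) and invoke Theorem \ref{TLocalizationForRightDenominatorSets} --- but the execution of the hard step (b) is genuinely different. Part (a) via the $\lambda$-adic geometric series for $(1+\lambda\tau)^{\star_\Omega -1}$ after peeling off the pointwise inverse of $s_0|_\Omega$ is an equivalent packaging of the paper's order-by-order recursion, and part (c) matches the paper's reasoning (there, the aplatisseur for the constant family $\phi_0=1$, $\phi_i=0$, $i\geq 1$, plays the role of your $s_0$). For (b), however, the paper does \emph{not} invoke Tougeron's \emph{fonction aplatisseur} as a black box and then post-process: it reproduces the aplatisseur construction inline, choosing the coefficients $\epsilon_j$ in $g=\sum_j\epsilon_j g_j$ directly against the seminorms $p_{K_{j+1},j}\bigl(C_l(\phi_{i-l},g_j)\bigr)$, so that the zero-extensions $f_{(i,N)}$ of the truncated coefficients $\bigl(\phi\star_\Omega\eta(g_{(N)})\bigr)_i$ form Cauchy sequences in $\mathcal{C}^\infty(X,\mathbb{K})$ converging to the desired $f_i$; that is, the aplatisseur is built in one pass to tame exactly the bidifferential expressions that arise. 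Your route instead applies the classical (undeformed) aplatisseur to a countable family of derivatives of the $\phi_k$ and then recovers the extensions of $C_{l}(\phi_j,\cdot\,)$ by the Leibniz-rule induction on $|\beta|$, which is correct and in fact a nice reduction to the commutative theorem. One step is glossed over, though: ``all partial derivatives in a countable atlas, reassembled by a partition of unity'' must be made precise, since chart derivatives are not globally defined functions on $\Omega$. One should, for instance, fix a countable locally finite atlas $(U_\iota)$, a shrinking $(V_\iota)$ with $\overline{V_\iota}\subset U_\iota$, cutoffs $\sigma_\iota\in\mathcal{C}^\infty_c(U_\iota)$ with $\sigma_\iota\equiv 1$ on $\overline{V_\iota}$, and feed the countable family $\{\sigma_\iota\,\partial^\alpha_\iota\phi_k\}_{\iota,\alpha,k}$ (extended by zero) into the aplatisseur; the Leibniz induction then runs chart by chart on each $V_\iota$, and the resulting local extensions of $C_l(\phi_j,s)$ glue because they all vanish on $X\setminus\Omega$. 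With that detail filled in, your argument closes. The trade-off: the paper's inlined construction controls exactly the quantities it needs and avoids the Leibniz post-processing; your version keeps Tougeron's lemma as a clean black box at the cost of the globalization step and the extra induction.
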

	\begin{proof} \textbf{1.} We have to check properties
		$(i.a.)$, $(i.b.)$, and $(i.c.)$ of Definition
		\ref{defi: bigdef}:\\
		$\bullet$~``\emph{$\eta$ is $S$-inverting}'' 
		(property $(i.a.)$): indeed,
		this is a classical reasoning from deformation quantization
		which we shall repeat for the convenience of the reader. Let
		$g\in S$ and $\gamma=\eta(g)$ its restriction to $\Omega$.
		Take $\psi\in R_\Omega$ and try to solve the equation
		$\gamma\star_\Omega \psi =1$. At order $k=0$ we get the
		condition $\gamma_0\psi_0=1$, but since $\gamma_0(x)\neq 0$
		for all $x\in \Omega$ the function 
		$x\mapsto \psi_0(x):=\gamma_0(x)^{-1}$ 
		is well-defined and smooth
		in $\mathcal{C}^\infty(\Omega,\mathbb{K})$. 
		Suppose by induction that the functions $\psi_0,\ldots,\psi_k\in
		\mathcal{C}^\infty(\Omega,\mathbb{K})$ have already been found in order to satisfy equation
		$\gamma\star_\Omega \psi =1$ up to order $k$. At order $k+1\geq 1$ the condition reads
		\[
		0 = \big(\gamma\star_\Omega \psi\big)_{k+1}
		=   \sum_{ \scriptsize \begin{array}{c}
				l,p,q=0 \\
				l+p+q=k+1
			\end{array}
		}^{k+1} C_l(\gamma_p,\psi_q)
		=  \gamma_0\psi_{k+1} 
		+F_{k+1}(\psi_0,\ldots,\psi_k,\gamma_0,\ldots,
		\gamma_{k+1})
		\]
		where the term starting with $F_{k+1}$ denotes the difference 
		$\big(\gamma\star_\Omega \psi\big)_{k+1}-\gamma_0\psi_{k+1}$
		which obviously does not contain $\psi_{k+1}$. Again,
		since $\gamma_0$ is nowhere zero on $\Omega$ the function 
		$\psi_{k+1}$ can be computed from this equation by
		multiplying with $x\mapsto \gamma_0(x)^{-1}$. Hence there is a solution $\psi\in R_\Omega$ of equation 
		$\gamma\star_\Omega \psi =1$. In a completely analogous way
		there is a solution $\psi'\in R_\Omega$ of the equation 
		$\psi' \star_\Omega\gamma =1$. By associativity of $\star_\Omega$ we get $\psi=\psi'$ as the unique inverse of $\gamma$ in the unital $K$-algebra $R_\Omega$.\\
		$\bullet$~``\emph{Every $\phi\in R_\Omega$ is equal to
			$\eta(f)\star_\Omega\eta(g)^{\star_\Omega -1}$ for some $f\in R$ and $g\in S$}'' (property $(i.b.)$): the main idea is to transfer the proof of Lemme 6.1 of Jean-Claude Tougerons's book \cite[p.113]{Tou72} to the
		non-commutative situation. Let 
		$\phi=\sum_{i=0}^\infty \lambda^i\phi_i\in R_\Omega$. We then fix the following data
		which we get thanks to the fact that $X$ and therefore
		each open set $\Omega$ is a second countable locally compact topological space: there is a sequence of compact sets
		$(K_n)_{n\in\mathbb{N}}$ of $X$, a sequence of open sets
		$(W_n)_{n\in\mathbb{N}}$,
		and a sequence of smooth functions $(g_n)_{n\in\mathbb{N}}:X\to \mathbb{R}$
		such that
		\[
		\bigcup_{n\in\mathbb{N}}K_n=\Omega,
		\]
		and
		\[
		\forall~n\in\mathbb{N}: K_n\subset W_n\subset \overline{W_n}\subset  K_{n+1}^\circ
		~~\mathrm{and}~~g_n(x)=\left\{\begin{array}{lc}
			1 & \mathrm{if}~x\in W_n, \\
			0 & \mathrm{if}~x\not\in K_{n+1}, \\
			y\in [0,1] & \mathrm{else}.
		\end{array}\right. .
		\]
		We denote by $\gamma_j$ the restriction $\eta(g_j)$ of
		$g_j$ to $\Omega$ for each nonnegative integer
		$j$.
		The idea is to define the denominator function $g$
		as a (non formal!) converging sum $g=\sum_{j=0}^\infty \epsilon_jg_j$.
		Choose a sequence $(\epsilon_j)_{j\in\mathbb{N}}$ of strictly positive real numbers such that
		\[
		\forall~j\in\mathbb{N}:~~ \epsilon_jp_{K_{j+1},j}(g_j)< \frac{1}{2^j}~~~\mathrm{and}~~~
		\forall~i\leq j\in\mathbb{N}:~~ \epsilon_j\sum_{l=0}^i
		p_{K_{j+1},j}\big(C_l(\phi_{i-l},g_j)\big)<
		\frac{1}{2^j}~
		\]
		(see eqn (\ref{EqDefSeminorms}) for the definition of the seminorms
		$p_{K,m}$)
		which is possible since for each nonnegative integer $j$ there are only finitely many seminorms and functions involved. For all nonnegative integers $i,j,N$ we define the functions
		$g_{(N)}\in \mathcal{C}^\infty(X,\mathbb{K})$, and
		$\psi_{ij},\psi_{(i,N)}\in \mathcal{C}^\infty(\Omega,\mathbb{K})$:
		\[
		g_{(N)}:=\sum_{j=0}^N\epsilon_jg_j,~~~
		\psi_{ij}:=\sum_{l=0}^iC_l\big(\phi_{i-l},\gamma_{j}\big),~~~
		\psi_{(i,N)}:=\sum_{j=0}^N\epsilon_j\psi_{ij}
		=\sum_{l=0}^iC_l\big(\phi_{i-l},\gamma_{(N)}\big),
		\]
		and since $\mathrm{supp}(g_{j})\subset K_{j+1}\subset \Omega$, hence
		$\mathrm{supp}(g_{(N)})\subset K_{N+1}\subset \Omega$,
		there are unique functions
		$f_{ij}\in\mathcal{C}^\infty(X,\mathbb{K})$ such that
		\[
		f_{ij}(x) := \left\{ \begin{array}{cl}
			\psi_{ij}(x) & \mathrm{if}~x\in \Omega,\\
			0            & \mathrm{if}~x\not\in\Omega.
		\end{array}\right.~,~~~
		\mathrm{hence}~~\eta(f_{ij})=\psi_{ij}~~~
		\mathrm{and}~~~\mathrm{supp}(f_{ij})\subset K_{j+1}.
		\]
		For each nonnegative integer $N$ we set
		$f_{(i,N)}:=\sum_{j=0}^N\epsilon_jf_{ij}\in 
		\mathcal{C}^\infty(X,\mathbb{K})$ with 
		$\mathrm{supp}(f_{(i,N)})\subset K_{N+1}$. Clearly,
		$\eta(f_{(i,N)})=\phi_{(i,N)}$.\\
		We shall now prove that both sequences $(g_{(N)})_{N\in\mathbb{N}}$, and for each  nonnegative integer $i$, $(f_{(i,N)})_{N\in\mathbb{N}}$ are Cauchy
		sequences in the complete metric space  $\mathcal{C}^\infty(X,\mathbb{K})$. First, it is obvious that for any two compact subsets $K,K'$ and nonnegative integers $N,N'$
		we always have for all $f\in\mathcal{C}^\infty(\mathbb{R}^n,\mathbb{K})$
		\begin{equation}
			\mathrm{if}~K\subset K'~\mathrm{and}~m\leq m'~\mathrm{then}~~ p_{K,m}(f)\leq p_{K',m'}(f).
		\end{equation}
		Fix a nonnegative integer $i$. 
		Let $\epsilon\in\mathbb{R}$, $\epsilon>0$, $K\subset X$  a compact subset, and $m\in\mathbb{N}$.
		Then there is a nonnegative  integer $N_0$ such that 
		\[
		\frac{1}{2^{N_0}}<\epsilon,~~~m\leq N_0,~~~\mathrm{and}~~i\leq N_0.
		\]
		Then for all nonnegative integers $N,p$ with $N\geq N_0$ we get (since for all $j\in\mathbb{N}$
		such that $N+1\leq j$ we have
		$m\leq N_0\leq N\leq j$ and $i\leq N$,
		and $\mathrm{supp}(f_{i,j})\subset K_{j+1}^\circ \subset K_{j+1}$)
		\begin{eqnarray*}
			p_{K,m}\big(f_{(i,N+p)}-f_{(i,N)}\big) & = &
			p_{K,m}\left(\sum_{j=N+1}^{N+p}\epsilon_jf_{i,j}\right) 
			\leq  \sum_{j=N+1}^{N+p}\epsilon_jp_{K,m}\big(f_{i,j}\big) 
			=\sum_{j=N+1}^{N+p}\epsilon_j
			p_{K\cap K_{j+1},m}\big(\psi_{ij}\big)\\
			& \leq & \sum_{j=N+1}^{N+p}\epsilon_j
			p_{K_{j+1},j}\left(\sum_{l=0}^iC_l(\phi_{i-l},g_j)\right)
			\leq 
			\sum_{j=N+1}^{N+p}\epsilon_j\sum_{l=0}^i
			p_{K_{j+1},j}\left(C_l(\phi_{i-l},g_j)\right)
			\\
			& < & \sum_{j=N+1}^{N+p} \frac{1}{2^j} =\frac{1}{2^N}\left(1-\frac{1}{2^p}\right) 
			<  \frac{1}{2^N} \leq \frac{1}{2^{N_0}} <\epsilon.
		\end{eqnarray*}
		It follows that for each $i\in\mathbb{N}$ the sequence $(f_{(i,N)})_{N\in \mathbb{N}}$ is a  Cauchy 
		sequence in the locally convex
		vector space $\mathcal{C}^\infty(X,\mathbb{K})$ hence converges to a smooth function
		$f_i=\sum_{j=0}^\infty \epsilon_j f_{i,j}$. Replacing in the above reasoning the function
		$\phi_0$ by the constant function $1$ on $\Omega$ it follows that
		the sequence $(g_{(N)})_{N\in\mathbb{N}}$ converges to a smooth function 
		$g:X\to \mathbb{R}$. Now let $x\in \Omega$. Then there is a nonnegative integer
		$j_0$ such that $x\in K_{j_0}$. It follows from the nonnegativity and the definition of all the $g_j$ and
		from the strict positivity of $\epsilon_j$ that
		\begin{equation}\label{EqCompFonctionApplatisseur}
		g(x)=\sum_{j=0}^\infty \epsilon_j g_j(x)\geq \epsilon_{j_0} g_{j_0}(x)=\epsilon_{j_0}>0
		\end{equation}
		showing that $g$ takes strictly positive values on $\Omega$
		whence $g\in S$.\\
		Now let $x\not \in\Omega$. Then for any
		$v\in T_xX$ with $h(v,v)\leq 1$
		we have that 
		\[
		\forall~m\in\mathbb{N}:~~ (D^mg_{(N)})(v)=\sum_{j=0}^N \epsilon_j (D^mg_{j})(v)=0
		\]
		because each $g_j$
		has compact support in $K_{j+1}\subset \Omega$. Since 
		$g_{(N)}\to g$ for
		$N\to \infty$ it follows by the continuity of differential operators and evaluation functionals that 
		$D^m g_{(N)}(v)\to D^m g(v)$,
		and hence 
		\begin{equation}\label{EqCompGFlatOutsideOmega}
			\forall~x\in X\setminus \Omega,~\forall~m\in \mathbb{N},~\forall~v\in T_xX,
			~h(v,v)\leq 1:~~(D^m g)(v)=0,
		\end{equation}
		and in a completely analogous manner
		\[
		\forall~x\in X\setminus \Omega,~\forall~m\in \mathbb{N},~\forall~v\in T_xX,
		~h(v,v)\leq 1:~~(D^m f_{i})(v)=0.
		\]
		Hence the infinite jets of all the functions $g$ and $f_i$, $i\in\mathbb{N}$,
		vanish outside the open subset $\Omega$. J.-C.~Tougeron
		calls the function $g$ \emph{fonction aplatisseur} for the
		family $(\phi_i)_{i\in\mathbb{N}}$ in case $C_l=0$ for
		$l\geq 1$.\\
		Now we get
		\[
		\left(\phi\star_U \eta(g_{(N)})\right)_i
		=\sum_{l=0}^iC_l\big(\phi_{i-l}, \eta(g_{(N)})\big)
		=\psi_{(i,N)}=\eta(f_{(i,N)}).
		\]
		Since the restriction map $\eta:\mathcal{C}^\infty(X,\mathbb{K})\to
		\mathcal{C}^\infty(\Omega,\mathbb{K})$ is continuous
		(where the Fr\'{e}chet topology on $\mathcal{C}^\infty(\Omega,\mathbb{K})$ is induced by those
		seminorms $p_{K,m}$ where $K\subset \Omega$) as are the bidifferential operators $C_l$ we can pass to the limit
		$N\to \infty$ in the above equation and get
		\[
		\phi\star_\Omega \eta(g) = 
		\sum_{i=0}^\infty \lambda^i 
		\big(\phi\star_\Omega \eta(g)\big)_i
		= \sum_{i=0}^\infty \lambda^i\eta(f_i)=:\eta(f).
		\]
		Since $g\in S$ it follows that $\eta(g)$ is invertible in
		$R_\Omega$ by property $(i.a)$ of Definition 
		\ref{defi: bigdef}, and the preceding equation implies
		$\phi=\eta(f)\star_\Omega \eta(g)^{\star_\Omega -1}$ thus proving property $(i.b)$ of Definition \ref{defi: bigdef}.\\
		$\bullet$~\emph{The kernel of $\eta$ is equal to the space
			of functions $f\in R$ such that there is $g\in S$
			with $f\star g=0$} (property $(i.c)$ of Definition 
		\ref{defi: bigdef}. Clearly if there is $f\in R$ and $g\in S$
		such that $f\star g=0$ then $\eta(f)\star_\Omega \eta(g)=0$,
		and since $\eta(g)$ is invertible in $R_\Omega$ we have
		$\eta(f)=0$.\\
		Conversely, if $f\in R$ such that $\eta(f)=0$, then for
		all integers $i\in\mathbb{N}$ and for all $x\in \Omega$
		we have $f_i(x)=0$. Hence the infinite jet of each $f_i$ vanishes
		at each point $x\in \Omega$ since $\Omega$ is open. Take
		the \emph{fonction aplatisseur} $g\in S$ constructed
		in the preceding part of the proof for $\phi_0=1,\phi_i=0$
		for all $i\geq 1$. Then we get
		\[
		\forall~x\in X:~(f\star g)_i(x)
		=\sum_{l=0}^i C_l(f_{i-l},g)(x)=
		\left\{\begin{array}{cl}
			0 & \mathrm{if~}x\in\Omega~\mathrm{since~every~jet  ~of~each}f_i~\mathrm{vanishes~in~}\Omega, \\
			0 & \mathrm{if~}x\not\in\Omega~
			\mathrm{since~every~jet~of~}g
			~\mathrm{vanishes~outside~of~}\Omega,
		\end{array}\right.
		\]
		where we have used eqn (\ref{EqCompGFlatOutsideOmega}) for the second alternative of the above statement.
		This proves part \textbf{1.} of the theorem.\\
		Statements \textbf{2.} and \textbf{3.} are immediate consequences of \textbf{1.} and Theorem \ref{TLocalizationForRightDenominatorSets}.
	\end{proof}
	
	\noindent \textbf{Remarks:} 
	For zero Poisson structure and
	trivial deformation $C_l=0$ for all $l\geq 1$ the above
	result specializes upon restricting to terms of order 0 to the classical result that algebraic
	and analytic localization with respect to an open subset
	$\Omega\subset X$ are isomorphic for 
	the commutative $\mathbb{K}$-algebra 
	$\mathcal{C}^\infty(X,\mathbb{K})$. \\
	Moreover, since for any
	closed set $F\subset X$ Tougeron's above construction gives us a smooth function $g:X\to \mathbb{R}$ which is nowhere zero on the open set $\Omega=X\setminus F$ and zero outside $\Omega$, hence on $F$, one gets the well-known result that
	the Zariski topology on $X$ induced by the commutative
	$\mathbb{K}$-algebra  $\mathcal{C}^\infty(X,\mathbb{K})$ coincides with 
	the usual manifold topology because each set $Z(I)$ is closed by continuity of all the functions in the ideal $I$, and conversely every closed set $F$ is the
	zero set $Z(gA)$ of the ideal $gA$ 
	(where $A=\mathcal{C}^\infty(X,\mathbb{K})$).\\
	Finally, note that the numerator morphism $\eta$ is injective iff the open set $\Omega$ is dense which is quite easy to see.

	\section{Noncommutative germs for smooth star products}
	\label{SecNoncommutative germs for smooth star products}
	
	Let $(X,\pi)$ again be a Poisson manifold, and let
	$\star=\sum_{l=0}^\infty\lambda^l
	C_l$ be a bidifferential 
	star product. Let $K=\mathbb{K}[[\lambda]]$, and we denote the unital $K$-algebra $\big(\mathcal{C}^\infty(X,\mathbb{K})[[\lambda]],\star\big)$
	by $R$.
	For any open set $U\subset X$ let $R_U$ denote the
	unital $K$-algebra $\big(\mathcal{C}^\infty(U,\mathbb{K})[[\lambda]],
	\star_U\big)$,
	where $\star_U$ denotes the obvious action of the bidifferential operators in $\star$ to the local functions
	in $\mathcal{C}^\infty(U,\mathbb{K})$. We write $R_X=R$. For any two open sets with
	$U\supset V$, denote by $\eta^U_V:R_U\to R_V$ be the restriction morphism where we write $\eta_U$ for $\eta^X_U$. Clearly, for $U\supset V\supset W$ one has the categorical identities $\eta^V_W\circ \eta^U_V=\eta^U_W$ and $\eta^U_U=\mathrm{id}_U$. Denoting by
	$\underline{X}$ the topology of $X$ it is readily checked that
	the family $\big(R_U\big)_{U\in \underline{X}}$ with the
	restriction morphisms $\eta^U_V$ defines a \emph{sheaf of $K$-algebras over $X$}, see e.g.~the book
	\cite{KS06} for definitions.\\
	Let $x_0$ a fixed point in $X$, and let 
	$\underline{X}_{x_0}\subset \underline{X}$ the set of all
	open sets containing $x_0$. We recall the definition of
	the \emph{stalk at $x_0$}, $R_{x_0}$ of the sheaf 
	$\big(R_U\big)_{U\in \underline{X}}$ whose elements are called \emph{germs at $x_0$}: it is defined as the inductive
	limit (or colimit, see \cite{Mac98}) $\lim_{U\in \underline{X}_{x_0}} R_U$. In order to perform computations we recall the more down-to-earth definition: let
	$\tilde{R}_{x_0}$ be the disjoint union of all the $R_U$, i.e.~ the set of all pairs $(U,f)$ where $U$ is an open
	set containing $x_0$ and $f\in \mathcal{C}^\infty(U,\mathbb{K})[[\lambda]]$. Define an 
	addition $+$ and a multiplication $\star$ on these pairs
	by
	\[
	(U,f)+(V,g)
	:= 
	\big(U\cap V,\eta^U_{U\cap V}(f)+\eta^V_{U\cap V}(g)\big)
	~~~\mathrm{and}~~~
	(U,f)\star(V,g)
	:= 
	\big(U\cap V,
	\eta^U_{U\cap V}(f)\star_{U\cap V}\eta^V_{U\cap V}(g)\big),
	\]
	and it is easily checked that the addition is associative and
	commutative, that the multiplication is associative, and that
	there is the distributive law. Furthermore, 
	the sum of $(U,f)$
	and $(V,0)$ equals $\big(U\cap V, \eta^U_{U\cap V}(f)\big)$
	which is equal to $(U,f)\star (V,1)=(V,1)\star (U,f)$. Next
	the binary relation $\sim_{x_0}$ defined by
	\[
	(U,f)\sim_{x_0} (V,g)~~\mathrm{iff}~~
	\exists~W\in\underline{X}_{x_0}~\mathrm{with~}
	W\subset U\cap V:~
	\eta^U_W(f)=\eta^V_W(g)
	\]
	turns out to be an equivalence relation. Denoting by $R_{x_0}$ the quotient set $\tilde{R}_{x_0}/\sim_{x_0}$
	and by $\eta^U_{x_0}:R_U\to R_{x_0}$ the restriction of the
	canonical projection $\tilde{R}_{x_0}\to R_{x_0}$ to
	$R_U\subset \tilde{R}_{x_0}$ (where $\eta^X_{x_0}$ will be shortened by
	$\eta_{x_0}:R\to R_{x_0}$) it is easy to see that the above
	addition and multiplication passes to the quotient, that
	all the zero elements $(U,0)$ are equivalent as are all the
	unit elements $(U,1)$, and that this defines the structure of a unital associative $K$-algebra
	denoted by $\big(R_{x_0},\star_{x_0}\big)$ on the quotient set such that
	all maps $\eta^U_{x_0}:\big(R_U,\star_U\big)\to \big(R_{x_0},\star_{x_0}\big)$ are morphisms of unital $K$-algebras. Note the following equations for all open sets
	$U\supset V$:
	\begin{equation} \label{EqCompFunctorialOnGerms}
		\eta^V_{x_0}\circ \eta^U_V = \eta^U_{x_0}.
	\end{equation}
	
	Define the following subsets $S=S(x_0)$ and $J=J_{x_0}$ of $R$:
	\begin{equation}
		S=S(x_0)=\left\{g\in R~|~g_0(x_0)\neq 0\right\}
		~~~\mathrm{and} ~~~
		J=J_{x_0}=\left\{g\in R~|~g_0(x_0)= 0\right\}.
	\end{equation}
	It is easy to see that $S=R\setminus J$, that $S$ is a \emph{multiplicative subset of $R$}, and that $J_{x_0}$ is a \emph{maximal ideal of $R$} (the quotient $R/J$ is isomorphic
	to the quotient $K/(\lambda K)\cong \mathbb{K}$ which is a field).\\
	We now have the following analog of Theorem
	\ref{TLocalizationEquivalenceOpenSubset}:
	
	\begin{theorem}\label{TLocalizationEquivalenceGerms}
		Using the previously fixed notations we get
		for any point $x_0\in X$:
		\begin{enumerate}
			\item $(R_{x_0},\star_{x_0})$ together with the morphism
			$\eta_{x_0}:R\to R_{x_0}$ consitutes a right $K$-algebra of fractions for $(R,S(x_0))$.
			\item As an immediate consequence we have that $S(x_0)$ is a right denominator set.
			\item \textbf{This implies in particular that the algebraic localization $RS^{-1}$
				of $R$ with respect to $S=S(x_0)$ is isomorphic to
				the concrete stalk $R_{x_0}$ as unital $K$-algebras.}
		\end{enumerate}	
	\end{theorem}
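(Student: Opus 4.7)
The strategy is to reduce everything to Theorem \ref{TLocalizationEquivalenceOpenSubset} by observing that $S(x_0)$ is built precisely from the local versions $S_U + \lambda R_U$ as $U$ shrinks toward $x_0$. Concretely, if $g\in S(x_0)$ then $g_0(x_0)\neq 0$ and by continuity of $g_0$ there is an open neighbourhood $U$ of $x_0$ on which $g_0$ is nowhere zero; hence $g$ lies in the multiplicative subset $S_U=S_{U,0}+\lambda R_U$ (in the sense of Section \ref{SecNonComLocOpenSets} applied to $\Omega=U$) and also the converse inclusion holds by taking any open $U$ on which the zeroth order does not vanish. So every element of $S(x_0)$ is a local denominator for some open $U\ni x_0$.

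The plan is to verify conditions (i.a), (i.b), (i.c) of Definition \ref{defi: bigdef}. For (i.a) (\emph{$\eta_{x_0}$ is $S(x_0)$-inverting}): pick $g\in S(x_0)$, choose an open $U\ni x_0$ with $g_0$ nowhere zero on $U$, and apply Theorem \ref{TLocalizationEquivalenceOpenSubset} to $\Omega=U$ to find a $\star_U$-inverse $\psi\in R_U$ of $\eta_U(g)$. Then by the morphism property of $\eta^U_{x_0}$ and the functorial identity \eqref{EqCompFunctorialOnGerms}, the germ $\eta^U_{x_0}(\psi)$ is a $\star_{x_0}$-inverse of $\eta_{x_0}(g)=\eta^U_{x_0}(\eta_U(g))$. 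For (i.b) (\emph{every element of $R_{x_0}$ is a right fraction}): a germ at $x_0$ is represented by some $(U,\phi)$ with $\phi\in R_U$; Theorem \ref{TLocalizationEquivalenceOpenSubset} applied to $U$ yields $f\in R$ and $g\in S_U+\lambda R_U$ with $\phi=\eta_U(f)\star_U \eta_U(g)^{\star_U -1}$, and since $g_0$ is nowhere zero on $U$ we have $g_0(x_0)\neq 0$, i.e.~$g\in S(x_0)$; applying $\eta^U_{x_0}$ and again using \eqref{EqCompFunctorialOnGerms} gives the desired representation of the germ as $\eta_{x_0}(f)\star_{x_0}\eta_{x_0}(g)^{\star_{x_0}-1}$.

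For (i.c) (\emph{description of $\ker\eta_{x_0}$}), one direction is immediate: if $f\star g=0$ with $g\in S(x_0)$ then $\eta_{x_0}(f)\star_{x_0}\eta_{x_0}(g)=0$ and (i.a) gives $\eta_{x_0}(f)=0$. For the converse, if $\eta_{x_0}(f)=0$ then by definition of the stalk there is an open $U\ni x_0$ with $\eta_U(f)=0$; Theorem \ref{TLocalizationEquivalenceOpenSubset} applied to $U$ produces some $g\in S_U+\lambda R_U\subset S(x_0)$ with $f\star g=0$ in $R$. This is the step where I expect the only real subtlety: one must use the full force of part (i.c) of Theorem \ref{TLocalizationEquivalenceOpenSubset} (which in turn rests on the \emph{fonction aplatisseur} construction) to produce a \emph{global} denominator $g$ from a local vanishing statement, rather than one only defined on $U$.

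Finally, parts 2 and 3 of the theorem are automatic consequences of part 1 via Theorem \ref{TLocalizationForRightDenominatorSets}: the existence of a right algebra of fractions forces $S(x_0)$ to be a right denominator set, and the universal property identifies $RS(x_0)^{-1}$ with $R_{x_0}$ as unital $K$-algebras. The proof therefore reads as a clean reduction to Theorem \ref{TLocalizationEquivalenceOpenSubset} combined with the colimit structure of the sheaf of germs; the only original analytic content is already absorbed in the flattener construction from Section \ref{SecNonComLocOpenSets}.
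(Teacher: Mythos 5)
Your proof is correct and follows essentially the same route as the paper's: for each of (i.a), (i.b), (i.c) you choose an open $U\ni x_0$ on which $g_0$ is nowhere zero, invoke Theorem~\ref{TLocalizationEquivalenceOpenSubset} on $U$, and push forward to the stalk via $\eta^U_{x_0}$ using the identity~\eqref{EqCompFunctorialOnGerms}; parts 2 and 3 then follow from Theorem~\ref{TLocalizationForRightDenominatorSets} exactly as in the paper. The only cosmetic difference is a notational slip ($S_U+\lambda R_U$ should read $S_U+\lambda R$, since the multiplicative set of Section~\ref{SecNonComLocOpenSets} consists of globally defined series whose zeroth term is nowhere zero on $U$), which does not affect the argument.
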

	\begin{proof}
		\textbf{1.} Once again, we have to check properties
		$(i.a.)$, $(i.b.)$, and $(i.c.)$ of Definition
		\ref{defi: bigdef}:\\
		$\bullet$~``\emph{$\eta_{x_0}$ is $S$-inverting}'' 
		(property $(i.a.)$): indeed, let $g\in S(x_0)$. Since
		$g_0(x_0)\neq 0$ there is an open neighbourhood $U$ of $x_0$ such that $g_0(y)\neq 0$ for all $y\in U$. Hence the restriction $\eta_U(g)$ is invertible in $(R_U,\star_U)$ by
		Theorem \ref{TLocalizationEquivalenceOpenSubset}. Using
		eqn (\ref{EqCompFunctorialOnGerms}) we see that $\eta_{x_0}(g)=\eta^U_{x_0}\big(\eta_U(g)\big)$, and the
		r.h.s.~is invertible in $R_{x_0}$ as the image of an invertible element $\eta_U(g)$ in $R_U$ with respect to the morphism of
		unital $K$-algebras $\eta^U_{x_0}$.\\
		$\bullet$~``\emph{Every $\phi\in R_{x_0}$ is equal to
			$\eta_{x_0}(f)\star_{x_0}\eta_{x_0}(g)^{\star_{x_0} -1}$ for some $f\in R$ and $g\in S(x_0)$}'' (property $(i.b.)$): indeed, let $\phi\in R_{x_0}$. By definition
		of $R_{x_0}$ as a quotient set there is an open neighbourhood $U$ of $x_0$ and an element $\psi\in R_U$
		with $\eta^U_{x_0}(U,\psi)=\phi$. According to the preceding
		Theorem \ref{TLocalizationEquivalenceOpenSubset} there are
		elements $f,g\in R$ with $g_0(y)\neq 0$ for all $y\in U$
		such that $\eta_U(f)=\psi\star_U\eta_U(g)$. In particular,
		$g_0(x_0)\neq 0$, hence $g\in S(x_0)$. Applying 
		$\eta^U_{x_0}$ to the preceding equation we get (upon using
		eqn (\ref{EqCompFunctorialOnGerms}))
		\[
		\eta_{x_0}(f)=\eta^U_{x_0}\big(\eta_U(f)\big)
		=\Big(\eta^U_{x_0}(\psi)\Big)\star_{x_0}
		\Big(\eta^U_{x_0}\big(\eta_U(g)\big)\Big)
		=\phi \star_{x_0} \big(\eta_{x_0}(g)\big)
		\]
		proving the result since $g\in S(x_0)$ and $\eta_{x_0}(g)$
		is invertible in the unital $K$-algebra $(R_{x_0},\star_{x_0})$.\\
		$\bullet$~\emph{The kernel of $\eta_{x_0}$ is equal to the space of functions $f\in R$ such that there is $g\in S(x_0)$
			with $f\star g=0$} (property $(i.c)$). Indeed, given $f\in R$ with $\eta_{x_0}(f)=0$ then there is an open neighbourhood $W$ of $x_0$ such that $\eta_W(f)=\eta_W(0)=0$. By the preceding Theorem
		\ref{TLocalizationEquivalenceOpenSubset} there is an element
		$g\in S_W\subset S(x_0)$ (which can be chosen to be a
		\emph{fonction aplatisseur}) such that $f\star g=0$. This proves \textbf{1.} of the theorem.\\
		\textbf{2.} and \textbf{3.} are immediate consequences of
		part \textbf{1.} and Theorem \ref{TLocalizationForRightDenominatorSets}.
	\end{proof}
	
	\noindent \textbf{Warning:} The stalk $R_{x_0}$ is taken in 
	the sense of sheaves of $\mathbb{K}[[\lambda]]$-algebras.
	Another interpretation would be two consider the
	sheaf 
	$\big(\mathcal{C}^\infty(U,\mathbb{K})
	\big)_{U\in\underline{X}_{x_0}}$ of commutative $\mathbb{K}$-algebras and the classical
	stalk $\mathcal{C}^\infty(X,\mathbb{K})_{x_0}$: in a completely analogous fashion it can be shown that it is 
	isomorphic to the algebraic localization with respect to
	the multiplicative set of functions which do not vanish
	at $x_0$. However the $\mathbb{K}[[\lambda]]$-module
	$\mathcal{C}^\infty(X,\mathbb{K})_{x_0}[[\lambda]]$ is NOT
	in general isomorphic to the above $R_{x_0}$: if 
	$f=\sum_{l=0}^\infty \lambda^lf_l$ is 
	a series of smooth functions such that $f_l$ vanishes
	on an closed ball of radius $\epsilon_l>0$ around $x_0$ where
	$\epsilon_l\to 0$ (for $l\to \infty$) and is non-zero outside, then the germ of each $f_l$ vanishes, but there
	is no common open neighbourhood of $x_0$ such that 
	$f$ restricted to that neighbourhood vanishes which would imply that the `$\mathbb{K}[[\lambda]]$-germ of
	$f$' vanishes. We shall come back to this problem in Section
	\ref{SecCommutativelyLocalizedStarProducts}.


	\section{Commutatively localized star products}
	 \label{SecCommutativelyLocalizedStarProducts}

	In this section we shall describe a more algebraic framework to generalize the two preceding sections.
	Let in the following $K$ be a fixed unital associative
	commutative ring. Unadorned tensor products $\otimes$
	are always with respect to $K$, hence $\otimes=\otimes_K$.
\subsection{Algebraic (multi)differential operators and
	    their localization}
	\label{SubSecAlgebraicMultidifferentialOperators}
	   	
	We shall first recall the well-known theory of 
	\emph{algebraic
	(multi)differential operators and their localization},
see e.g.~\cite{KLV86}, \cite{Nes03}, \cite{LR97},
     \cite[p.566-578]{Wal07}, and \cite{Vez97}: let $A$ be commutative associative unital $K$-algebra. We shall need the theory only for $A$ and its tensor products over $K$,
     but --as usual-- indulging in some more generality has the benefit of being more economic for the computations: let $M$ and $N$ be
     left $A$-modules. For each $a\in A$ fix the following
     $K$-linear maps $L_a$, $R_a$, and $\mathrm{ad}_a$ 
     from the $K$-module $\mathrm{Hom}_K(M,N)$ to itself defined in the following way for all $\phi\in 
     \mathrm{Hom}_K(M,N)$ and $m\in M$:
     \begin{equation}
      \big(L_a(\phi)\big)(m)=a\big(\phi(m)\big),~~
      \big(R_a(\phi)\big)(m)=\phi(am),~~
      \mathrm{ad}_a(\phi)=L_a(\phi)-R_a(\phi)
     \end{equation}
     which obviously all commute.
     Then a $K$-linear map $\phi:M\to N$ is called a
     \emph{differential operator of order $k\in\mathbb{N}$ with respect to the $K$-algebra $A$} iff for all $a_1,\ldots,a_{k+1}\in A$ we have
     $\left(\mathrm{ad}_{a_1}\circ\cdots\circ \mathrm{ad}_{a_{k+1}}\right)(\phi)=0$. It is well-known
     that the set of all differential operators of order $k$
     forms an $A$-$A$-bimodule (w.r.t.~$L_a$ and $R_a$), and
     that these bimodules form an increasing filtration (indexed by the order) of
     the $A$-$A$-bimodule $\mathrm{Hom}_K(M,N)$ whose union
     in $\mathrm{Hom}_K(M,N)$ is called the $A$-$A$-bimodule of all differential operators. The
     $A$-$A$-bimodule of all differential operators of order
     $0$ is clearly identical to the set of all 
     $A$-linear maps. Moreover, the composition $\psi\circ \phi$
     of a differential operator $\phi:M\to N$ of order 
     $k_1$ and a differential operator $\psi:M\to P$ (where
     $P$ is another $A$-module) of order $k_2$ is a differential operator $M\to P$ of order $k_1+k_2$.
     Therefore there is a category $A\mathbf{-Moddiff}$
     whose objects are $A$-modules and morphisms differential
     operators.
     Let $A'$ be another unital associative commutative
     $K$-algebra, and $M'$, $N'$ be $A'$-modules. If
     $\phi:M\to N$ and $\phi':M'\to N'$ are differential operators of order $k$ and $k'$, respectively, with respect to $A$ and $A'$, respectively, then
     \begin{equation}\label{EqCompTensorProductOfDiffOps}
     	\phi\otimes \phi':
     	      M\otimes M'\to N\otimes N'
     	      ~~\mathrm{is~a~differential~operator~
     	      of~order~}k+k'~
     	      \mathrm{with~respect~to~}A\otimes A',
     \end{equation}
     which follows from the obvious equation
     $\mathrm{ad}_{a\otimes a'}(\phi\otimes \phi')
     =\left(\mathrm{ad_a}(\phi)\right)
       \otimes \left(R_{a'}(\phi')\right)
       +\left(L_a(\phi)\right)
       \otimes \left(\mathrm{ad}_{a'}(\phi')\right)$
       for all $a,a'\in A$, and its iterations.
     Moreover, if $\chi:A\to A'$ is a $K$-algebra morphism
     and $\phi':M'\to N'$ a differential operator of order
     $k'$ with respect to $A'$ it is obvious that $\phi'$ is also
     a differential operator of the same order $k'$ with respect to $A$ whence there is an obvious \emph{restriction functor}
     from $A'\mathbf{-Moddiff}$ to $A\mathbf{-Moddiff}$.
     In the particular case of $A'=A_S$, the algebra of quotients of $A$ with respect to a fixed multiplicative subset $S\subset A$, and $\chi=\eta$, the numerator morphism, this restriction functor has a left adjoint which amounts to the \emph{localization of differential operators} as has been shown by G.Vezzosi in his PhD-thesis, see \cite[Prop.~3.3]{Vez97}:
     \begin{theorem}[G.Vezzosi, 1997] 
     	\label{TVezzosi}
     	Given the $K$-algebra $A$ and
     	the multiplicative subset $S$ there is a covariant functor
     	$(~)_S:A\mathbf{-Moddiff}\to A_S\mathbf{-Moddiff}$
     	which is left adjoint to the above restriction
     	functor $A\mathbf{-Moddiff}\leftarrow A_S\mathbf{-Moddiff}$ induced by the numerator morphism $A\to A_S$: on objects it is given by
     	the localization of modules $M\to M_S$, and
     	each differential operator $D:M\to N$ of order $k$
     	w.r.t.~$A$ is mapped to the following differential
     	operator $D_S:M_S\to N_S$ of the same order $k$
     	w.r.t.~$A_S$
     	defined as follows for all $m\in M$ and $s\in S$
     	\begin{equation}\label{EqDefDifferentialOpLocalized}
     	  D_S\left(\frac{m}{s}\right)
     	  = \sum_{r=1}^{k+1}{k+1 \choose r}(-1)^{r+1}
     	      \frac{D\big(s^{r-1}m\big)}{s^r}.
     	\end{equation}
     	In particular, $D_S$ is uniquely determined by
     	its values $D_S\left(\frac{m}{1}\right)=
     	\frac{D(m)}{1}$ for all $m\in M$, and it follows that $(D\circ D')_S
     	=D_S\circ D'_S$ whenever the composition $D\circ D'$
     	makes sense.
     \end{theorem}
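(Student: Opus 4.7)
The plan is to force formula (\ref{EqDefDifferentialOpLocalized}) by a uniqueness argument, adopt it as the definition, and then verify in turn: well-definedness on $M_S$, the order bound for $D_S$, functoriality, and the adjunction.

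For uniqueness, I would suppose $\widetilde{D}:M_S\to N_S$ is any differential operator of order $\leq k$ over $A_S$ with $\widetilde{D}(m/1)=D(m)/1$ for all $m\in M$. Since $s\in A\subset A_S$, the relation $\mathrm{ad}_s^{k+1}(\widetilde{D})=0$ holds; expanding $(L_s-R_s)^{k+1}(\widetilde{D})$ via the binomial theorem, applying it to $m/s\in M_S$, and isolating the $r=0$ summand gives
\[
 s^{k+1}\,\widetilde{D}(m/s)=\sum_{r=1}^{k+1}\binom{k+1}{r}(-1)^{r+1}s^{k+1-r}\,D(s^{r-1}m)/1,
\]
since $s^r\cdot m/s=s^{r-1}m/1$ for $r\geq 1$. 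Dividing by the invertible element $s^{k+1}\in A_S$ recovers (\ref{EqDefDifferentialOpLocalized}), which both motivates the formula and proves uniqueness of any such extension.

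Taking (\ref{EqDefDifferentialOpLocalized}) as the definition of $D_S$, well-definedness on the quotient set $M_S$ requires that if $m/s=m'/s'$ with witness $t\in S$ satisfying $t(s'm-sm')=0$, then the two expressions for $D_S(m/s)$ and $D_S(m'/s')$ coincide in $N_S$. This is a routine but tedious manipulation using the reformulation $s^{k+1}D(m)=\sum_{r=1}^{k+1}\binom{k+1}{r}(-1)^{r+1}s^{k+1-r}D(s^rm)$ of $\mathrm{ad}_s^{k+1}(D)=0$, applied after clearing denominators by a sufficiently high power of $ss't$.

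The main obstacle is verifying that $D_S$ is itself a differential operator of order $\leq k$ over $A_S$, i.e., $\mathrm{ad}_x^{k+1}(D_S)=0$ for every $x\in A_S$. I would exploit the identities $L_{1/s}=L_s^{-1}$ and $R_{1/s}=R_s^{-1}$ (with $L_s,R_s$ commuting), the Leibniz-type rule $\mathrm{ad}_{ab}=L_a\mathrm{ad}_b+\mathrm{ad}_aR_b$, and the $K$-multilinearity of the iterated ad in its arguments, to reduce the check to the case $x=a$ in the image of $A$. The latter is a direct binomial computation from (\ref{EqDefDifferentialOpLocalized}) combined with $\mathrm{ad}_a^{k+1}(D)=0$. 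Once this is in hand, functoriality $(D\circ D')_S=D_S\circ D'_S$ follows immediately from uniqueness, as both sides are differential operators of order $\leq k_1+k_2$ over $A_S$ agreeing on elements of the form $m/1$. Finally, for the adjunction: given an $A_S$-module $N'$ (viewed as an $A$-module via the numerator morphism), the natural bijection $\mathrm{Hom}^{A_S}_{\mathrm{diff}}(M_S,N')\cong\mathrm{Hom}^{A}_{\mathrm{diff}}(M,N')$ sends $\Phi$ to $\Phi\circ(m\mapsto m/1)$, its inverse being the Vezzosi extension (which is available because elements of $S$ already act invertibly on $N'$); naturality and the triangle identities follow formally from uniqueness.
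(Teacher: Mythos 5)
Your overall strategy---derive formula (\ref{EqDefDifferentialOpLocalized}) by a uniqueness argument from $\ad_{s/1}^{k+1}(D_S)=0$, adopt it as a definition, check well-definedness and the order bound, then get functoriality and the adjunction from uniqueness---is essentially the route taken by the paper (which defers the details to Vezzosi's paper \cite{Vez97}). The one genuine methodological difference is in the well-definedness step: the paper's sketch proceeds \emph{by induction over the order of $D$}, whereas you propose a direct denominator-clearing computation based on the rearranged identity $s^{k+1}D(m)=\sum_{r=1}^{k+1}\binom{k+1}{r}(-1)^{r+1}s^{k+1-r}D(s^rm)$. Both are viable; the inductive formulation has the advantage of decoupling the argument from the precise combinatorics, and also produces the observation that $k$ may be replaced by any $k'\geq k$ in (\ref{EqDefDifferentialOpLocalized}), which the paper explicitly flags.

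There is, however, a gap in the step where you verify that $D_S$ has order $\leq k$ over $A_S$. You write the condition as ``$\ad_x^{k+1}(D_S)=0$ for every $x\in A_S$'' but the actual definition requires $\ad_{x_1}\circ\cdots\circ\ad_{x_{k+1}}(D_S)=0$ for \emph{arbitrary, possibly distinct} $x_1,\dots,x_{k+1}\in A_S$. The diagonal condition $\ad_x^{k+1}(D_S)=0$ is equivalent to this only when $(k+1)!$ is invertible in $K$ (by polarization), but Section~\ref{SecCommutativelyLocalizedStarProducts} of the paper works over an arbitrary unital commutative ring $K$, so the reduction to the diagonal is not available. For the same reason, ``a direct binomial computation from (\ref{EqDefDifferentialOpLocalized}) combined with $\ad_a^{k+1}(D)=0$'' does not suffice: you would need the full mixed products $\ad_{a_1}\circ\cdots\circ\ad_{a_{k+1}}(D)=0$.

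Relatedly, the invocation of ``$K$-multilinearity of the iterated ad'' to reduce from $A_S$ to $\eta(A)$ is not quite what does the work: $A_S$ is not $K$-spanned by $\eta(A)$, so multilinearity alone gives no reduction. What you actually need is the Leibniz rule $\ad_{ab}=L_a\ad_b+\ad_aR_b$ together with the observation that $\ad_{1/s}=-L_{1/s}\,\ad_s\,R_{1/s}$ (obtained from $\ad_1=0$ and the invertibility of $L_s,R_s$), which lets you write $\ad_{a/s}=\bigl(\ad_a - L_{a/s}\,\ad_s\bigr)R_{1/s}$. Since the $L$'s and $R$'s commute with each other and with all $\ad$'s, a product $\ad_{x_1}\cdots\ad_{x_{k+1}}$ with $x_i\in A_S$ expands into a sum of terms each containing exactly $k+1$ factors of the form $\ad_{c}$ with $c\in\eta(A)$; each such term annihilates $D_S$ once the $\eta(A)$-case is known. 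This is the correct mechanism for your reduction, and it does not rely on any characteristic hypothesis. With these two points repaired, the remainder of your outline (functoriality by uniqueness, adjunction by constructing the bijection $\Phi\leftrightarrow\Phi\circ\eta$) is sound and matches the paper's intended argument.
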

    \noindent The proof is quite technical: eqn (\ref{EqDefDifferentialOpLocalized}) is motivated by
    the fact that if $D_S:M_S\to N_S$ is a differential operator of order $k$ satisfying $D_S(m/1)=D(m)/1$ then --by definition-- it satisfies $0=-(1/s)^{k+1}\big(\ad_{s/1}^{k+1}(D_S)\big)(m/s)$
    for all $m\in M$ and $s\in S$ which gives
     eqn (\ref{EqDefDifferentialOpLocalized}).
     The right hand side of eqn (\ref{EqDefDifferentialOpLocalized}) can be defined for any $K$-linear map $M\to N$ as a
    set-theoretic map $M\times S\to N_S$, and the fact that
    it only depends (first in a set-theoretical way) on the fraction $\frac{m}{s}$ is shown
    by induction over the order of the differential  operator $D$. Note also that
    it can be shown a posteriori that the integer $k$ 
    in eqn (\ref{EqDefDifferentialOpLocalized}) can
    be replaced by any integer $k'\geq k$ without changing
    the left hand side.
    
    Next, let $p$ be a positive integer, let $M_1,\ldots,M_p$,
    $N$ be $A$-modules, and $\mathsf{k} =(k_1,\dots ,k_p) \in 
    \mathbb{N}^p$ a multi-index. Recall that a $K$-linear map
    $C:M:=M_1\otimes\cdots\otimes M_p\to N$ is called a \emph{multidifferential operator of rank} $p$ \emph{of order}
    $\mathsf{k}$ \emph{with respect to $A$} --which is sometimes also called a \emph{polydifferential operator}-- iff for each integer $1\leq i\leq p$ and
    for all $m_1\in M_1,\ldots,m_{i-1}\in M_{i-1}$, $m_{i+1}\in M_{i+1},\ldots,m_p\in M_p$ the $K$-linear map
    $M_i\to N$ given by $m_i\mapsto C(m_1\otimes\cdots\otimes m_p)$ is a differential operator of order $k_i$. For the particular case $A = \CCinf(X)$ for a smooth manifold $X$ this algebraic definition is well-known to coincide with the analytic definition, see e.g.~\cite[p.~575, Satz A.5.2.]{Wal07} which means that in local charts an (algebraically defined) multidifferential operator looks as in eqn (\ref{EqDefMultiDifferentialOperatorsAn}).\\
    For our purposes it is more convenient to use the following formulation: note that the $K$-module
    $M_1\otimes\cdots\otimes M_p$ is a module with respect to
    the unital commutative associative $K$-algebra
    $A^{\otimes p}=A\otimes\cdots\otimes A$ ($p$ tensor factors) in a natural way, and that $N$ also can be viewed as a $A^{\otimes p}$-module by means of
    $(a_1\otimes\cdots\otimes a_p)n=a_1\cdots a_pn$
    for all $a_1,\ldots,a_p\in A$ and $n\in N$. Let 
    $\Phi:M_1\otimes\cdots\otimes M_p\to N$ be a
    $K$-linear map. If it is a differential operator of
    order $k$ with respect to $A^{\otimes p}$ it is easy to
    see by restricting to $1\otimes \cdots 1\otimes a_i\otimes 1\otimes \cdots \otimes 1 \in A^{\otimes p}$,
    $1\leq i\leq p$, $a_i\in A$, that $\Phi$ is a multidifferential operator of rank $p$ and order $(k,\ldots,k)$ with respect to $A$. Conversely, 
    for any $a\in A$ and any integer $1\leq r\leq p$
    writing $L_a, R^{r}_a, \mathrm{ad}^{r}_a$ for the
    following $K$-linear maps from
    $\mathrm{Hom}_K(M_1\otimes\cdots\otimes M,N)$
    to itself given by (for all $m\in M$) $(L_a(C))(m)
    =aC(m)$, $(R^{(r)}_a(C))(m)=C(a^{(r)}m)$
    (where 
    $a^{(r)}
    =1^{\otimes(r-1)}\otimes a \otimes
    	1^{\otimes (p-r)}$), and $\mathrm{ad}^{(r)}_a=L_a-
    	R^{(r)}_a$, there is the easy identity for all
    $a_1,\ldots,a_p\in A$
    \[
      \mathrm{ad}_{a_1\otimes\cdots\otimes a_p}
         =\sum_{r=1}^pL_{a_1}\circ \cdots\circ 
         L_{a_{r-1}}\circ 
         \mathrm{ad}^{(r)}_{a_{r}}\circ R^{(r+1)}_{a_{r+1}}
         \circ\cdots\circ R^{(p)}_{a_{p}}.
    \]
    By iteration this shows 
    that if $C$ is a multidifferential operator of rank $p$ and order $\mathsf{k}=(k_1,\ldots,k_p)$ w.r.t.~$A$ then $C$ is
    a differential operator of order $k_1+\cdots+k_p$ w.r.t.
    $A^{\otimes p}$. Hence
    \begin{equation}
    \big\{\mathrm{multi differential~operators~of~rank~}p
    \mathrm{~w.r.t.}~A\big\}
    =\big\{\mathrm{differential~operators}
    \mathrm{~w.r.t.}~A^{\otimes p}\big\}.
    \end{equation}
    With this identification, given a multiplicative subset
    $S\subset A$ it is now straight-forward to 
    localize multidifferential operators by localizing
    them as differential operators w.r.t.~$A^{\otimes p}$
    taking the multiplicative subset 
    $S^{\otimes p}\subset A^{\otimes p}$ (which is the obvious iteration of Remark 2 before eqn (\ref{EqCompLocalizationOfTensorProducts})) upon using Vezzosi's Theorem \ref{TVezzosi}. Note that it is easy
    to see that the localization of the $A$-module $N$
    w.r.t.~the multiplicative subset $S$ is naturally
    isomorphic to the localization of $N$ seen as a
    $A^{\otimes p}$-module w.r.t.~the multiplicative
    subset $S^{\otimes p}$.\\
    We are interested in the particular case where all the
    $A$ modules $M_1,\ldots,M_p,N$ are equal to $A$
    for which we state the preceding considerations in
    the following
    \begin{prop}\label{PLocMultiDiffOpsAAAAToA}
		Let $S_0 \subset A$ be a multiplicative subset, let
		$A_{S_0}$ be the ordinary commutative localization of
		$A$ w.r.t.~$S_0$, and let
		$\eta_{(A,S_0)} =\eta:A\to A_{S_0}$ be the numerator morphism. Let $C$ be a multidifferential operator of rank $p$ from $A^{\otimes p}$ to $A$.\\
		Then there exists a unique  multidifferential operator of rank $p$ , $C_{S_0}$, from $(A_{S_0})^{\otimes p}$ to $A_{S_0}$  such that $\eta\circ C = C_{S_0} \circ \eta^{\otimes p}$.\\
		Furthermore, given another multidifferential operator $C'$ of rank $p'$ we have 
		$(C \circ_i C')_{S_0} = C_{S_0} \circ_i C'_{S_0}$
		for each integer $1\leq i\leq p$.
	\end{prop}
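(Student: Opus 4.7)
The plan is to reduce everything to Vezzosi's Theorem~\ref{TVezzosi} via the identification spelled out earlier in Subsection~\ref{SubSecAlgebraicMultidifferentialOperators}: a multidifferential operator of rank $p$ from $A^{\otimes p}$ to $A$ is precisely a differential operator with respect to the commutative $K$-algebra $B := A^{\otimes p}$, with $A$ viewed as a $B$-module through the product map $\mu : B \to A$, $a_1\otimes\cdots\otimes a_p \mapsto a_1\cdots a_p$. The natural multiplicative subset of $B$ on which to localize is $T := S_0^{\otimes p} = \{s_1 \otimes\cdots\otimes s_p \mid s_i \in S_0\}$.

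First I would apply Theorem~\ref{TVezzosi} to the pair $(B, T)$ and to the differential operator $C : B \to A$, obtaining a localized differential operator $C_T : B_T \to A_T$ of the same order, uniquely characterized by the relation $C_T(b/1) = C(b)/1$ for $b \in B$. Two identifications then convert $C_T$ into an operator $(A_{S_0})^{\otimes p} \to A_{S_0}$: the iterated form of Remark~2 (formula~(\ref{EqCompLocalizationOfTensorProducts})) furnishes a canonical $K$-algebra isomorphism $B_T \cong (A_{S_0})^{\otimes p}$; and the $B$-module $A$, whose $B$-action factors through $\mu$, has its $B$-module localization $A_T$ canonically isomorphic to the $A$-module localization of $A$ at $\mu(T) = \{s_1\cdots s_p \mid s_i \in S_0\} = S_0$, namely $A_{S_0}$. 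Through the reverse identification (multi-diff $=$ diff over the tensor-product algebra) the resulting $K$-linear map $C_{S_0} : (A_{S_0})^{\otimes p} \to A_{S_0}$ is then a multidifferential operator of rank $p$ over $A_{S_0}$. The intertwining relation $\eta \circ C = C_{S_0} \circ \eta^{\otimes p}$ is just the translation of $C_T(b/1) = C(b)/1$ under these identifications, and uniqueness is immediate because formula~(\ref{EqDefDifferentialOpLocalized}) forces the value of any candidate $C_{S_0}$ on an arbitrary fraction from its values on the image of $\eta^{\otimes p}$.

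For the composition part, I would write $C \circ_i C'$ as an honest composition of differential operators by introducing
\[
\widetilde{C'} := \mathrm{id}_{A}^{\otimes(i-1)} \otimes C' \otimes \mathrm{id}_{A}^{\otimes(p-i)} ~~:~~ A^{\otimes(p+p'-1)} \to A^{\otimes p},
\]
which by (\ref{EqCompTensorProductOfDiffOps}) is a differential operator with respect to $A^{\otimes(p+p'-1)}$, the target $A^{\otimes p}$ being an $A^{\otimes(p+p'-1)}$-module through the obvious partial product map. Then $C \circ_i C' = C \circ \widetilde{C'}$ as $K$-linear maps, and the functoriality statement $(D \circ D')_S = D_S \circ D'_S$ of Theorem~\ref{TVezzosi} applied to this composition, combined with the iterated tensor-product identification for $q = p, p', p+p'-1$, yields $(C \circ_i C')_{S_0} = C_{S_0} \circ_i C'_{S_0}$. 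What remains to be checked is that the localization of $\widetilde{C'}$ is $\mathrm{id}^{\otimes(i-1)} \otimes C'_{S_0} \otimes \mathrm{id}^{\otimes(p-i)}$; this follows by the uniqueness part already established, since both operators agree on fractions whose denominator in each slot equals $1$.

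The main obstacle will be the bookkeeping of the several localizations involved: the $B$-module $A$ is localized at $T \subset B$ rather than directly at $S_0 \subset A$, and one must verify that the tensor-product identification $B_T \cong (A_{S_0})^{\otimes p}$ is compatible with Vezzosi's explicit formula~(\ref{EqDefDifferentialOpLocalized}) in such a way that the resulting operator is genuinely multilinear-over-$A_{S_0}$ in each of its $p$ slots, and not merely $B_T$-linear in one collective slot. No essentially new ideas beyond Theorem~\ref{TVezzosi} and Remark~2 are required; all the work lies in the careful unpacking of these natural identifications.
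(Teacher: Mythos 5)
Your proposal is correct and follows essentially the same route as the paper: the paper's own (very terse) proof simply invokes ``the above considerations'', which are precisely the identification of rank-$p$ multidifferential operators $A^{\otimes p}\to A$ with differential operators w.r.t.\ $A^{\otimes p}$, Vezzosi's Theorem~\ref{TVezzosi} applied to $(A^{\otimes p},S_0^{\otimes p})$, the tensor-product isomorphism (\ref{EqCompLocalizationOfTensorProducts}), and the same rewriting $C\circ_i C' = C\circ(\mathrm{id}^{\otimes(i-1)}\otimes C'\otimes\mathrm{id}^{\otimes(p-i)})$ together with (\ref{EqCompTensorProductOfDiffOps}) for the composition part. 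You merely spell out explicitly the bookkeeping that the paper leaves implicit.
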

	\begin{proof}
		The first part follows from the above considerations.
		The second part follows from the equation
		$C\circ_i C'=C\circ \big(
		\mathrm{id}^{\otimes (i-1)}\otimes C'\otimes 
		\mathrm{id}^{\otimes (p-i)}\big)$ seen as composition
		of differential operators w.r.t.~the $K$-algebra
		$A^{\otimes (p+p'-1)}$ and multiplicative subset
		$S_0^{\otimes (p+p'-1)}$ using eqn 
		(\ref{EqCompTensorProductOfDiffOps}).
	\end{proof}

	\subsection{Commutatively localized algebraic star products}
	\label{SubSecCommutativelyLocalizedStarProducts}
	
	Observe now that the Definition \ref{DefStarProducts} of star products can be generalized to any commutative associative unital $K$-algebra
	$A$ whence the significance `bidifferential' for the 
	$K$-bilinear maps $C_k:A\times A\to A$ is now given by the algebraic definition outlined in the preceding Section \ref{SubSecAlgebraicMultidifferentialOperators}.
	We have
	\begin{prop}
		Let $A$ be a commutative unital $K$-algebra 
		and a differential star product $\star = \sum_{i=0}^\infty \lambda^i  C_i$ on 
		$R:=A[[\lambda]]$ where the $C_i$ are bidifferential operators on $A$. For any multiplicative subset
		$S_0 \subset A$ there exists a 
		unique star product $\star_{S_0}$ on $A_{S_0}\ph$
		such that the numerator map $\eta$ canonically extended as a $K[[\lambda]]$-linear map  (also denoted
		$\eta$)
		$A\ph\to A_{S_0}\ph$  is a morphism of unital $K[[\lambda]]$-algebras. 
	\end{prop}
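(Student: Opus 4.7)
The plan is to apply Proposition \ref{PLocMultiDiffOpsAAAAToA} componentwise to the bidifferential operators $C_i$ comprising $\star$, assemble the localized operators into a $K[[\lambda]]$-bilinear operation $\star_{S_0} := \sum_{i=0}^\infty \lambda^i C_{i,S_0}$ on $A_{S_0}[[\lambda]]$, and then verify the axioms of Definition \ref{DefStarProducts}. Uniqueness is then immediate: any other such star product $\star'$ with bidifferential coefficients $C'_i$ for which $\eta$ is a $K[[\lambda]]$-algebra morphism must satisfy $C'_i \circ \eta^{\otimes 2} = \eta \circ C_i$, and the uniqueness part of Proposition \ref{PLocMultiDiffOpsAAAAToA} forces $C'_i = C_{i,S_0}$ order by order.

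For the classical limit, $C_0$ is the multiplication of $A$ (a bidifferential operator of order $(0,0)$), and since $\eta$ is a ring homomorphism, the multiplication of $A_{S_0}$ satisfies the defining relation $\eta \circ C_0 = (\mathrm{mult}_{A_{S_0}}) \circ \eta^{\otimes 2}$; by uniqueness in Proposition \ref{PLocMultiDiffOpsAAAAToA} this multiplication equals $C_{0,S_0}$. For the unit condition, I observe that $b \mapsto C_{i,S_0}(1, b)$ is a differential operator $A_{S_0} \to A_{S_0}$ with respect to $A_{S_0}$, and since $\eta(1) = 1$ it satisfies $C_{i,S_0}(1, \cdot) \circ \eta = \eta \circ C_i(1, \cdot)$; hence it must be the Vezzosi localization of the rank-one differential operator $C_i(1, \cdot)$. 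As $C_0(1, \cdot) = \mathrm{id}_A$ and $C_i(1, \cdot) = 0$ for $i \geq 1$, uniqueness of Vezzosi localization yields $C_{0,S_0}(1, \cdot) = \mathrm{id}_{A_{S_0}}$ and $C_{i,S_0}(1, \cdot) = 0$ for $i \geq 1$; the right-unit condition is handled symmetrically.

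Associativity is where the composition rule of Proposition \ref{PLocMultiDiffOpsAAAAToA} is essential: associativity of $\star$ at order $k$ reads as the trilinear identity $\sum_{l=0}^k \bigl(C_l \circ_1 C_{k-l} - C_l \circ_2 C_{k-l}\bigr) = 0$ on $A^{\otimes 3}$. Using the compositional statement $(C_l \circ_j C_{k-l})_{S_0} = C_{l,S_0} \circ_j C_{k-l,S_0}$ together with the $K$-linearity of Vezzosi localization (which follows from its uniqueness characterization: any sum of localizations satisfies the defining relation for the sum, hence is the localization of the sum), the identity transfers verbatim to the localized operators, yielding associativity of $\star_{S_0}$. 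The morphism property $\eta(f \star g) = \eta(f) \star_{S_0} \eta(g)$ is then automatic by $K[[\lambda]]$-linear extension of $\eta \circ C_i = C_{i,S_0} \circ \eta^{\otimes 2}$.

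The only conceptual obstacle is threading the uniqueness of Vezzosi localization through the various compatibility checks (unit, classical limit, associativity, and the morphism property), but each step reduces cleanly to a single invocation of Proposition \ref{PLocMultiDiffOpsAAAAToA}; no genuinely new computation is required beyond what was packaged there.
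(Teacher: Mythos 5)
Your proposal is correct and follows exactly the paper's approach: localize the $C_i$ by Proposition \ref{PLocMultiDiffOpsAAAAToA}, and use the compatibility with $\circ_1,\circ_2$ for associativity. The paper's own proof is a two-sentence version of the same argument; your write-up merely spells out the classical limit, unit, and uniqueness checks, all of which reduce correctly to the uniqueness statement in Vezzosi's theorem.
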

	\begin{proof}
		This follows from the previous Proposition \ref {PLocMultiDiffOpsAAAAToA} by considering the localization of the bidifferential operators $C_i$. It remains associative since the localization is compatible with the compositions $\circ_1$ and $\circ_2$.
	\end{proof}
	
	\noindent With the above structures $A,S_0,\star$ 
	we set $R=A[[\lambda]]$ and consider the following rather
	natural subset
	\begin{equation}
		S: = S_0 + \lambda A[[\lambda]] \subset R=A[[\lambda]].
	\end{equation}
	which can be called the \emph{canonical deformation of the multiplicative subset $S_0$}.
	Then we have the
	\begin{prop}\label{PSZeroSigmaCMImpliesAllsWell}
		The subset $S=S_0+\lambda R$ is a multiplicative subset
		of the algebra $(R,\star)$, and its image under 
		$\eta$ consists of invertible elements of the
		$K[[\lambda]]$-algebra
		$\big(A_{S_0}[[\lambda]], \star_{S_0}\big)$.\\
		It follows that there is a canonical morphism
		of unital algebras over $K[[\lambda]]$
		\begin{equation}
			\label{EqDefMorphismLocCommutesWithDeform}
			\Phi:\Big( \big(A[[\lambda]]\big)_S, \star_S\Big)\to
			\big( A_{S_0}[[\lambda]] ,\star_{S_0}\big).
		\end{equation}
		where the localization $\big(A[[\lambda]]\big)_S$
		is the general construction, see Proposition
		\ref{PGeneralLocalization}.
	\end{prop}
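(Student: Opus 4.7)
The plan is to verify the three claims in the stated order, each reducing to a short application of material already in the excerpt.

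\textbf{Step 1.} To see that $S=S_0+\lambda R$ is multiplicative in $(R,\star)$, note first that $1\in S_0\subset S$. Given $s,t\in S$, decompose $s=s_0+\lambda s'$, $t=t_0+\lambda t'$ with $s_0,t_0\in S_0$ and $s',t'\in R$. The order-zero coefficient of $s\star t$ equals $C_0(s_0,t_0)=s_0t_0$, which lies in $S_0$ by multiplicativity of $S_0\subset A$; hence $s\star t\in S_0+\lambda R=S$. This is a one-line computation using only that $C_0$ is the pointwise product.

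\textbf{Step 2.} To invert $\eta(s)$ in $(A_{S_0}[[\lambda]],\star_{S_0})$ for an arbitrary $s=s_0+\lambda s'\in S$, I would imitate the order-by-order argument from the proof of Theorem \ref{TLocalizationEquivalenceOpenSubset}. The zeroth-order term of $\eta(s)\in A_{S_0}[[\lambda]]$ is $s_0/1$, which is invertible in the commutative algebra $A_{S_0}$ with inverse $1/s_0$. Trying to solve $\eta(s)\star_{S_0}\psi=1$ for $\psi=\sum_{k\geq 0}\lambda^k\psi_k$, the order-zero equation yields $\psi_0=1/s_0$, and at every subsequent order $k+1$ the equation takes the form $(s_0/1)\psi_{k+1}+F_{k+1}(\psi_0,\dots,\psi_k;\eta(s)_0,\dots,\eta(s)_{k+1})=0$, which uniquely determines $\psi_{k+1}$ by multiplication with $1/s_0$. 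A symmetric recursion produces a left inverse, and associativity of $\star_{S_0}$ forces the two to coincide. Hence $\eta$ is $S$-inverting.

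\textbf{Step 3.} The previous proposition gives $\eta:(R,\star)\to (A_{S_0}[[\lambda]],\star_{S_0})$ as a morphism of unital $K[[\lambda]]$-algebras, and Steps 1--2 show it is $S$-inverting. The existence and uniqueness of $\Phi$ is then immediate from the universal property of the general noncommutative localization spelled out in Proposition \ref{PGeneralLocalization}: $\Phi$ is the unique $K[[\lambda]]$-algebra morphism from $(R_S,\star_S)$ to $(A_{S_0}[[\lambda]],\star_{S_0})$ satisfying $\Phi\circ\eta_{(R,S)}=\eta$.

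I do not anticipate any genuine obstacle; the proposition is essentially a packaging result combining the previously established localization of bidifferential operators with the universal property of the general localization. The only point requiring a moment of attention is checking that the order-by-order inversion in Step 2 actually works inside $A_{S_0}[[\lambda]]$ rather than just at the level of the undeformed $A_{S_0}$, but this is formally identical to the classical deformation-quantization argument already used in Section \ref{SecNonComLocOpenSets}.
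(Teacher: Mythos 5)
Your proposal is correct and follows essentially the same route as the paper's own (much terser) argument: the paper notes that multiplicativity of $S$ is clear because the deformation terms of $\star$ come in higher $\lambda$-orders, invokes the order-by-order inversion argument from the start of the proof of Theorem \ref{TLocalizationEquivalenceOpenSubset} (remarking that it is purely algebraic), and obtains $\Phi$ from the universal property of Proposition \ref{PGeneralLocalization}. You have simply written out the same three steps in more explicit detail.
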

	\noindent Indeed, since the deformation terms of $\star$ come in
	higher orders of $\lambda$ it is clear that $S$ is multiplicative. Since $\eta(S_0)$ is invertible in
	$A_{S_0}$ this also holds for the image under $\eta$ of the canonical deformation $S$ of $S_0$, see the reasoning in the beginning of the proof of
	Theorem \ref{TLocalizationEquivalenceOpenSubset}
	which is completely algebraic.
	The existence of the algebra morphims $\Phi$ is then clear from the universal property of the localized algebra, see Proposition \ref{PGeneralLocalization}.
	
	Here we come to two general problems:
	\begin{center}
\textbf{1. Does localization commute with deformation ?}\\
     Meaning: is the above morphism $\Phi$ (\ref{EqDefMorphismLocCommutesWithDeform}) an 
     isomorphism?\\[2mm]
\textbf{2. Is $S$ a right (or left) denominator set?}
	\end{center}

	Note that even in the commutative case, i.e.~the localization of an algebra $R[[\lambda]]$ where $R$ is commutative, the map $\Phi$ is not always an isomorphism. This has already been noted in \cite{Arn73}.
	
For localization with respect to open sets (see section \ref{SecNonComLocOpenSets} ($S_0=S_\Omega$) the morphism
	$\Phi$ is an isomorphism, and $S=S_\Omega+\lambda R$ is a left and right denominator set. However, $\Phi$ is not injective
	for the germs ($S_0=A\setminus I_{x_0}$) in section
	\ref{SecNoncommutative germs for smooth star products})
	as the warning at the end of the section indicates
	although $S=S_0+\lambda R$ is a left and right denominator set.

	One reason why $\Phi$ is in general not an isomorphism is that $\big(A[[\lambda]]\big)_S$ is 
	in general no longer a \emph{topologically free  $K[[\lambda]]$-module}, see e.g.~\cite[p.388-391]{Kas95}
	for all the details.
	Given a $K[[\lambda]]$-module $M$ there is a natural topology with basis induced by the (descending) filtration $\{\lambda^k M \}_{k \in \N}$. 
The space $M$ is complete if for every sequence $(m_i ) \subset M$ the series $\sum_{i=0}^\infty m_i \lambda^i$ convergences in $M$. It is Hausdorff iff $\bigcap_{i=0}^\infty \lambda^i M = \{0\}$ iff $\{0\}$ is closed in the $\lambda$-adic topology. 
	A $K[[\lambda]]$-linear map between two $K[[\lambda]]$-modules is always continuous.
	 \\
Next, a $K[[\lambda]]$-module is called (topologically) free if it is isomorphic to a $K[[\lambda]]$-module of the form $V[[\lambda]]$ for some $K$-module $V$.  
	We have $V[[\lambda]] =  V \hat\otimes_K K[[\lambda]]$. Note that here  the tensor product is not the algebraic tensor product, but its completion in the $\lambda$-adic topology, see e.g.~\cite[p.~390-391]{Kas95}. Moreover recall that the $\lambda$-torsion  
	of a $K[[\lambda]]$ module $M$ is the set of elements  $m \in M$ for which $\lambda  m =0$.
	There is the following well-known characterization, see 
	e.g.~\cite[p.390, Prop.~XVI.2.4.]{Kas95}:
	\begin{prop}
		A $K[[\lambda]]$-module $M$ is topologically free if and only if it is complete and Hausdorff in the $\lambda$-adic topology and $\lambda$-torsion free. In this case $M \cong (M/\lambda M)[[\lambda]]$.
	\end{prop}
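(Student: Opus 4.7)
The plan is to prove both directions of the characterization separately, the forward direction being essentially formal and the backward direction requiring a construction of the isomorphism $M \cong (M/\lambda M)[[\lambda]]$ via a lift of the projection.

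For the forward direction, I would assume $M \cong V[[\lambda]]$ for some $K$-module $V$. Each of the three properties follows directly from manipulating formal power series: an element $m=\sum_i v_i\lambda^i \in V[[\lambda]]$ belongs to $\lambda^k V[[\lambda]]$ iff $v_0 = \cdots = v_{k-1} = 0$, which immediately gives Hausdorffness ($\bigcap_k \lambda^k V[[\lambda]] = \{0\}$) and $\lambda$-torsion freeness ($\lambda m = 0$ forces all $v_i$ to vanish). Completeness is the standard observation that given a sequence $(m_i)$ in $V[[\lambda]]$ with $m_i = \sum_j v_{i,j}\lambda^j$, the sum $\sum_i m_i \lambda^i$ has a well-defined coefficient $\sum_{i+j=k} v_{i,j}$ in each degree $k$ (a finite sum).

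For the backward direction, assume $M$ is complete, Hausdorff, and $\lambda$-torsion free, and set $V := M/\lambda M$. The key step is to choose a $K$-linear section $\sigma: V \to M$ of the canonical projection $\pi: M \to V$ (such a $\sigma$ exists because we work with $K$-modules, but strictly one might want to remark this is harmless for the isomorphism class of the result). Using completeness, define the $K[[\lambda]]$-linear map
\begin{equation}
\Psi: V[[\lambda]] \to M, \qquad \sum_{i=0}^\infty v_i\lambda^i \mapsto \sum_{i=0}^\infty \sigma(v_i)\lambda^i.
\end{equation}
To prove surjectivity, I would iteratively construct for each $m\in M$ a sequence $(v_i)\in V^{\mathbb{N}}$ and a sequence $(m_i)\in M^{\mathbb{N}}$ with $m_0=m$, $v_i := \pi(m_i)$, and $m_i - \sigma(v_i) \in \lambda M$ (true by construction); the $\lambda$-torsion freeness then allows us to unambiguously define $m_{i+1}$ by $m_i - \sigma(v_i) = \lambda m_{i+1}$. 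Iterating shows $m - \sum_{i=0}^{N}\sigma(v_i)\lambda^i \in \lambda^{N+1}M$, and Hausdorffness together with completeness gives $m = \Psi\big(\sum_i v_i\lambda^i\big)$.

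For injectivity, suppose $\Psi\big(\sum_i v_i\lambda^i\big) = 0$. Applying $\pi$ gives $v_0 = 0$; then $\lambda \sum_{i=0}^\infty \sigma(v_{i+1})\lambda^i = 0$, and by $\lambda$-torsion freeness $\sum_{i=0}^\infty \sigma(v_{i+1})\lambda^i = 0$, so the argument repeats to give $v_i = 0$ for all $i$. The main (mild) obstacle is ensuring that the iteration defining $m_{i+1}$ is well-posed and yields a genuine convergent reconstruction of $m$; all three hypotheses (completeness, Hausdorffness, $\lambda$-torsion freeness) are used precisely once in this step, which is why the three conditions together are both necessary and sufficient. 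The final isomorphism $M \cong (M/\lambda M)[[\lambda]]$ is then the inverse of $\Psi$.
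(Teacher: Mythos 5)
Your argument is the standard Kassel-style proof (the paper only cites \cite[Prop.~XVI.2.4]{Kas95} without writing one out), and the forward direction, the inductive construction of the $v_i$ via torsion-freeness, the identification $m=\Psi\big(\sum_i v_i\lambda^i\big)$ via Hausdorffness, and the coefficient-peeling injectivity argument are all fine. The genuine gap is the existence of the $K$-linear section $\sigma$. Your parenthetical ``such a $\sigma$ exists because we work with $K$-modules'' is not an argument: a $K$-linear section of the projection $M\to M/\lambda M$ is precisely a $K$-linear splitting of $0\to\lambda M\to M\to M/\lambda M\to 0$, which for a general commutative ring $K$ need not exist. It does exist when $K$ is a field, or more generally when $M/\lambda M$ is projective over $K$, and your whole backward direction depends on it. Your second remark (``harmless for the isomorphism class'') concerns only the non-uniqueness of $\sigma$, which is indeed harmless; it does not address existence.

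This gap is not cosmetic: in the generality the paper assumes in Section~4 ($K$ an arbitrary commutative unital ring) the proposition is actually false. Take $K=\mathbb{Z}$ and $M=\mathbb{Z}_2$, the $2$-adic integers, viewed as a $\mathbb{Z}[[\lambda]]$-module with $\lambda$ acting as multiplication by $2$. Then $\lambda^k M=2^k\mathbb{Z}_2$, so the $\lambda$-adic topology is the $2$-adic one; hence $M$ is complete and Hausdorff, and it is $\lambda$-torsion free since $\mathbb{Z}_2$ is a domain. But $M/\lambda M\cong\mathbb{Z}/2\mathbb{Z}$, and $M\not\cong(\mathbb{Z}/2\mathbb{Z})[[\lambda]]$ because the latter is $2$-torsion as an abelian group while $\mathbb{Z}_2$ is $2$-torsion free; since any $W$ with $M\cong W[[\lambda]]$ must satisfy $W\cong M/\lambda M$, it follows that $M$ is not topologically free. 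You should therefore state explicitly the hypothesis that guarantees $\sigma$ exists (e.g.\ $K$ a field, as Kassel assumes, or $M/\lambda M$ projective over $K$), after which your proof is complete.
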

\noindent Since completeness and Hausdorffness are preserved by isomorphism, $\big(A[[\lambda]]\big)_S$ needs to be complete and Hausdorff for $\Phi$ to be an isomorphism. In fact, it needs to be topologically free. \\
If  $\big(A[[\lambda]]\big)_S$ is not Hausdorff, $\Phi$ is not injective, since then $\Phi^{-1}(0) \neq \{0\}$, since it  is a closed subset in the $\lambda$-adic topology.\\ 
The example of germs (Section \ref{SecNoncommutative germs for smooth star products}) is an example of this:\\
Consider the example at the end of Section \ref{SecNoncommutative germs for smooth star products}. Then for any $k  \in \N$, we have $\sum_{l=0}^k \lambda^l f_l = 0 \in R_{x_0}$ since it vanishes on the ball of radius $\epsilon_k$  around $x_0$ (if we choose the sequence $(\epsilon_l)$ monotone). This means $f = \lambda^k \sum_{l_0}^\infty \lambda^l f_{l-k}$ so $f \in \lambda^k R_{x_0}$ for all $k$ but as stated before $f \neq 0$.

\begin{prop}
	Consider the situation of Proposition \ref{PSZeroSigmaCMImpliesAllsWell}.
	If $(A[[\lambda]])_S$ is complete then the map $\Phi$ is surjective. 
\end{prop}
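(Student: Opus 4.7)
\medskip

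\noindent\textbf{Proof plan.} My strategy would be to produce, for each $\phi\in A_{S_0}\ph$, an explicit preimage under $\Phi$ by successive approximation in the $\lambda$-adic topology: cancel $\phi\bmod\lambda$ first, then the next remainder modulo $\lambda$, and so on, letting the completeness hypothesis assemble these partial corrections into a genuine element of $(A\ph)_S$.

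The starting point is the following simple observation. Write $\eta=\eta_{(A\ph,S)}$ for the numerator morphism. For any $a\in A$ and $s\in S_0\subset S$, the element $\eta(s)$ is invertible by Proposition \ref{PSZeroSigmaCMImpliesAllsWell}, and
\[
w(a,s):=\eta(a)\star_S\bigl(\eta(s)\bigr)^{\star_S-1}\in(A\ph)_S
\]
is sent by $\Phi$ to $(a/1)\star_{S_0}(s/1)^{\star_{S_0}-1}\in A_{S_0}\ph$. Because the zeroth bidifferential operator of any star product coincides with commutative multiplication, the $\lambda^0$-coefficient of this image is precisely the ordinary fraction $a/s$; but its higher-order terms are \emph{not} zero in general, so one only has $\Phi(w(a,s))\equiv a/s\pmod{\lambda A_{S_0}\ph}$. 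This is the source of difficulty: the naive ansatz $y=\sum_k\lambda^{k}w(a_k,s_k)$ associated to a coefficient-wise fraction representation $\phi_k=a_k/s_k$ would \emph{not} satisfy $\Phi(y)=\phi$, since each $\Phi(w(a_k,s_k))$ contaminates the higher orders.

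The remedy is an iterative cancellation. Set $y^{(-1)}:=0$ and, given $y^{(N)}\in(A\ph)_S$ with $\phi-\Phi(y^{(N)})\in\lambda^{N+1}A_{S_0}\ph$, write $\phi-\Phi(y^{(N)})=\lambda^{N+1}\psi$, choose any fraction representation $a_{N+1}/s_{N+1}$ of the $\lambda^0$-coefficient of $\psi\in A_{S_0}\ph$, and define
\[
y^{(N+1)}:=y^{(N)}+\lambda^{N+1}\,w(a_{N+1},s_{N+1}).
\]
Combining the previous observation with the $K\ph$-linearity of $\Phi$ and the centrality of $\lambda$ in $(A\ph)_S$, this step bumps the order of cancellation up by one, i.e.\ $\phi-\Phi(y^{(N+1)})\in\lambda^{N+2}A_{S_0}\ph$, while $y^{(N+1)}-y^{(N)}\in\lambda^{N+1}(A\ph)_S$.

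Since $y^{(N)}=\sum_{k=0}^{N}\lambda^{k}w(a_k,s_k)$ is a partial sum of a formal $\lambda$-series, the completeness hypothesis provides a limit $y\in(A\ph)_S$. The map $\Phi$ is $K\ph$-linear and hence continuous in the $\lambda$-adic topology, and $A_{S_0}\ph$ is Hausdorff (being topologically free), so $\Phi(y)=\lim_{N\to\infty}\Phi(y^{(N)})=\phi$, giving surjectivity. The main point to watch is precisely the one flagged above: the mismatch between the commutative inverse $1/s$ and the $\star_{S_0}$-inverse $(s/1)^{\star_{S_0}-1}$ at positive orders in $\lambda$ forces the proof to be iterative rather than a one-shot coefficient-by-coefficient lift; beyond that, everything is routine.
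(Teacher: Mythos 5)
Your proposal is correct and follows essentially the same route as the paper: start from the observation that $\Phi\big(\eta(a)\star_S\eta(s)^{\star_S-1}\big)$ agrees with $a/s$ only modulo $\lambda$, then cancel the higher-order residue iteratively and invoke completeness of $(A[[\lambda]])_S$ (together with continuity of the $K[[\lambda]]$-linear map $\Phi$ and Hausdorffness of the target) to pass to the limit. The only cosmetic difference is that the paper phrases the iteration so as to exhibit a single commutative fraction $a_0/s_0$ as the image of a convergent $\lambda$-series of right fractions, whereas you run the same recursion directly against an arbitrary $\phi\in A_{S_0}[[\lambda]]$; your formulation is slightly more self-contained, but the mechanism is identical.
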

\begin{proof}
	Let $a_0 \in A$ and $s_0 \in S_0$.
	We have $\Phi(a_0 \star_S (s_0)^{\star-1}) = \Phi(a) \star_{S_0} \Phi(s^{\star-1}) =  \frac{a_0}{s_0} + \lambda r $ with $r = \frac{a_1}{s_1} \in (A)_{S_0}[[\lambda]]$.  Recursively one can find $a_i \in A, s_i \in S_0$, such that $\Phi(a_0 \star_S (s_0)^{\star-1} - \sum_{i=1}^\infty \lambda^i a_i \star_S (s_i)^{\star-1} =  \frac{a_0}{s_0} $.  The series on the left hand side converges since  we assume $A[[\lambda]]_S$ to be complete.
\end{proof}
More generally, it is always possible to extend the map $\Phi$ to the completion of $A[[\lambda]]_S$ due to the completeness of $(A_0)_{S_0}[[\lambda]]$ and continuity of $\Phi$. The previous proposition implies that this extension is surjective.

\noindent It may be interesting to develop a noncommutative
localization along the lines of Section \ref{SubSecAlgebraicLocalization}, in particular in the spririt of
Proposition \ref{PGeneralLocalization} and/or
Theorem \ref{TLocalizationForRightDenominatorSets}, for complete unital associative $K[[\lambda]]$-algebras
whose multiplicative subsets have some additional properties.

\subsection{A particular result generalizing the 
	restriction to open sets, Section \ref{SecNonComLocOpenSets}}

Let $A$ be a $K$-algebra. Suppose that the multiplicative
set $S_0\subset A$ has the following property
\begin{equation}
  \label{EqDefSigmaCM}
   \forall~\mathrm{~sequence~}(s_n)_{n\in \mathbb{N}}\in S_0~
   \exists~\mathrm{~sequence~}
    (b_n)_{n\in \mathbb{N}}\in A~\mathrm{and}~s\in S_0:
    \mathrm{s.~t.~}\forall~n\in\mathbb{N}:
    ~~s_nb_n=s.
\end{equation}
Note that for a sequence having only a finite number of pairwise different terms this is always trivially satisfied by choosing
for $s$ the common multiple of all the members in the associated finite set of the sequence. Moreover in the 
uninteresting case where $S_0$ contains $0$ the above
property (\ref{EqDefSigmaCM}) is trivially satisfied by
choosing the constant $0$-sequence for $(b_n)_{n\in\mathbb{N}}$.
Returning to the general case, we shall refer to property 
(\ref{EqDefSigmaCM}) as $\sigma CM$
(something like `countable common multiple').
A similar property has been considered in \cite{Gil67,She71}. However, there the common multiple $s$ is only considered to be different from 0.
Note however they consider domains, so $s$ is no zero divisor. This implies that one can consider the multiplicative set $S'$ generated by $S$ and $s$. Further the localization with respect to $S$ embeds injectively into the localization with respect to $S'$.
\begin{prop}
	For any open set $\Omega\subset X$ of a smooth manifold
	the multiplicative subset $S_\Omega=\{g\in A~|~\forall~x\in\Omega:~g(x)\neq 0\}$ appearing in 
	Section \ref{SecNonComLocOpenSets} has the 
	$\sigma CM$-property.
\end{prop}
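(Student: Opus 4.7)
The plan is to adapt Tougeron's \emph{fonction aplatisseur} construction used in the proof of Theorem~\ref{TLocalizationEquivalenceOpenSubset}, specialised to the purely commutative case ($C_l=0$ for $l\geq 1$), and to apply it to the countable family of smooth functions $\phi_n:=1/s_n|_\Omega\in\mathcal{C}^\infty(\Omega,\mathbb{K})$, which are well-defined because each $s_n$ is nowhere zero on $\Omega$. The common multiple $s$ will play the role of the denominator $g$ there, and the quotients $b_n=s/s_n$ (extended by $0$ outside $\Omega$) the role of the associated numerators.

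Concretely, I would fix a compact exhaustion $(K_j)_{j\in\mathbb{N}}$ of $\Omega$ with $K_j\subset K_{j+1}^\circ\subset\Omega$, smooth bump functions $g_j:X\to[0,1]$ with $g_j\equiv 1$ on $K_j$ and $\mathrm{supp}(g_j)\subset K_{j+1}^\circ\subset\Omega$, and real numbers $\epsilon_j>0$ chosen so that for all $i\leq j$
\[
    \epsilon_j\,p_{K_{j+1},j}(g_j)<\frac{1}{2^j}
    \qquad\textrm{and}\qquad
    \epsilon_j\,p_{K_{j+1},j}(\phi_i\, g_j)<\frac{1}{2^j}.
\]
For each fixed $j$ this imposes only finitely many conditions, hence such $\epsilon_j$ exist. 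Denoting by $f_{n,j}\in\mathcal{C}^\infty(X,\mathbb{K})$ the extension by zero of $\phi_n g_j$ (smooth because $\mathrm{supp}(g_j)\subset\Omega$), I would set
\[
    s:=\sum_{j=0}^\infty\epsilon_j g_j
    \qquad\textrm{and}\qquad
    b_n:=\sum_{j=0}^\infty\epsilon_j f_{n,j},
\]
and invoke the Cauchy-sequence estimate already carried out in the proof of Theorem~\ref{TLocalizationEquivalenceOpenSubset}, the diagonal condition $i\leq j$ controlling the tail of the series in every seminorm $p_{K,m}$, to conclude that both series converge in the Fr\'{e}chet topology and thus define smooth functions on $X$.

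It then remains to check the three properties $s\in S_\Omega$, $b_n\in A$, and $s_n b_n=s$. Strict positivity of $s$ on $\Omega$ follows as in eqn~(\ref{EqCompFonctionApplatisseur}): any $x\in\Omega$ lies in some $K_{j_0}$, so $s(x)\geq\epsilon_{j_0}g_{j_0}(x)=\epsilon_{j_0}>0$. For the identity on products one notes that $s_n\cdot(\phi_n g_j)=g_j$ on $\Omega$, while both $f_{n,j}$ and $g_j$ vanish on $X\setminus K_{j+1}$, hence $s_n f_{n,j}=g_j$ holds on all of~$X$; since pointwise multiplication by the smooth function $s_n$ is continuous on $\mathcal{C}^\infty(X,\mathbb{K})$ in the Fr\'{e}chet topology, it commutes with the convergent series and yields $s_n b_n=\sum_{j=0}^\infty\epsilon_j g_j=s$, as required. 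I do not anticipate any substantial obstacle beyond correctly setting up the diagonal choice of the $\epsilon_j$ so that one single $s$ works simultaneously for the entire countable family $(\phi_n)_{n\in\mathbb{N}}$; this is exactly the trick already implemented in Section~\ref{SecNonComLocOpenSets}.
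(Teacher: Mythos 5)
Your proposal is correct and follows essentially the same approach as the paper: the paper likewise reduces the claim to the \emph{fonction aplatisseur} construction in the proof of Theorem~\ref{TLocalizationEquivalenceOpenSubset}, specialised to the trivial deformation $C_l=0$ for $l\geq 1$, applied to $\phi=\sum_{n=0}^\infty\lambda^n(1/s_n)$, with the resulting $g$ being the common multiple $s$. You have merely unpacked that reference into an explicit rerun of the construction, which is a clean way to present it.
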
	
\noindent Indeed this follows from the proof of Theorem \ref{TLocalizationEquivalenceOpenSubset} in the trivial case where all the bidifferential operators of strictly positive order vanish, and where we set --for any given sequence $(s_n)_{n\in\mathbb{N}}$-- $\phi(x)=\sum_{n=0}^\infty
\lambda^n (1/s_n(x))$ for all $x\in \Omega$, and the
function applatisseur $g$ (see eqn (\ref{EqCompFonctionApplatisseur})) 
will be the desired element $s\in S_\Omega$. This construction is due to J.-C.Tougeron \cite{Tou65}.

The main result of this subsection is the following
\begin{prop}
  Suppose that the multiplicative subset $S_0\subset A$	
  satisfies the $\sigma CM$ property. \\
  Then the morphism
  $\Phi$, see eqn (\ref{EqDefMorphismLocCommutesWithDeform}), is an isomorphism, and the deformed multiplicative subset
  $S=S_0+\lambda A[[\lambda]]$ is right and left denominator subset of the algebra $R$.
\end{prop}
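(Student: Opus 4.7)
The plan is to show that $(A_{S_0}[[\lambda]],\star_{S_0})$ together with the coefficient-wise extension of the commutative numerator morphism, again denoted $\eta:A[[\lambda]]\to A_{S_0}[[\lambda]]$, constitutes a right $K[[\lambda]]$-algebra of fractions for $(A[[\lambda]],S)$ in the sense of Definition \ref{defi: bigdef}. Once this is verified, Theorem \ref{TLocalizationForRightDenominatorSets} immediately yields that $S$ is a right denominator set and that $\Phi$ is the canonical isomorphism; the left-sided statement then follows by the symmetric argument, since the commutativity of $A$ makes both $\sigma CM$ and Vezzosi's localization of bidifferential operators insensitive to the side on which one multiplies. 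Property $(i.a)$, that $\eta$ is $S$-inverting, is handled by the standard formal-power-series recursion from the start of the proof of Theorem \ref{TLocalizationEquivalenceOpenSubset}: for $g\in S$ the leading coefficient $g_0\in S_0$ is already invertible in $A_{S_0}$, and the higher coefficients of $\eta(g)^{\star_{S_0}-1}$ are determined one by one.

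For property $(i.b)$, given $\phi\in A_{S_0}[[\lambda]]$, I would first apply $\sigma CM$ to the denominators of the $\phi_i$ to get $s_*\in S_0$ and $h_i\in A$ with $\phi_i=h_i/s_*$. Denoting by $N_l,N''_l$ the maximal orders of $C_l$ in the two slots and setting $L_l:=N_l+N''_l+1$, a second application of $\sigma CM$ to the sequence $(s_*^{L_l})_{l\geq 0}\subset S_0$ produces $g_0\in S_0$ with $s_*^{L_l}\mid g_0$ in $A$ for every $l$; viewed as a constant series $g_0\in S$. The claim is then that $\phi\star_{S_0}\eta(g_0)\in\eta(A[[\lambda]])$: the $l=0$ contribution at each order follows from $s_*\mid g_0$, while for $l\geq 1$ one invokes Vezzosi's formula (Theorem \ref{TVezzosi}) in the first slot to write the summand as a sum of $\eta(C_l(s_*^{r-1}h_{k-l},g_0))/s_*^{r}$ for $r\leq N_l+1$, and then uses the standard order-filtration fact that an $A$-differential operator of order $\leq N$ sends $(s_*)^M$ into $(s_*)^{M-N}$ for $M\geq N$ (provable by induction using that $\mathrm{ad}_{s_*}$ lowers the order) in the second slot to conclude that each numerator lies in $s_*^{L_l-N''_l}A=s_*^{N_l+1}A$, precisely cancelling the denominators.

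For property $(i.c)$, from $\eta(f)=0$ one gets $s_if_i=0$ componentwise, and $\sigma CM$ collapses these into a single $s\in S_0$ with $sf=0$ in $A[[\lambda]]$. To upgrade this into a $\star$-annihilator I would rely on the identity $\mathrm{ad}_s^{N+1}(D)=0$ for every $A$-differential operator $D$ of order $\leq N$, which combined with $sf=0$ forces $s^{N_l+1}C_l(f_{k-l},-)=0$ for any choice of second argument. I would then iterate: put $f^{(0)}:=f$, pick $s^{(n)}\in S_0$ annihilating $f^{(n)}$, set $f^{(n+1)}:=f^{(n)}\star s^{(n)}$. The argument just given forces $\nu_\lambda(f^{(n+1)})\geq n+1$, while $\eta(f^{(n+1)})=0$ guarantees that $s^{(n+1)}$ exists via a fresh application of $\sigma CM$. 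Hence $f\star(s^{(0)}\star\cdots\star s^{(n-1)})\equiv 0\pmod{\lambda^n}$ for every $n$, and a final application of $\sigma CM$ to the $S_0$-sequence of zeroth coefficients $s^{(0)}s^{(1)}\cdots s^{(n-1)}$, combined with the same Vezzosi/order-filtration machinery used in $(i.b)$, furnishes a single $g\in S$ that works for all orders simultaneously, i.e.\ $f\star g=0$.

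I expect $(i.c)$ to be the main obstacle: the analytic proof in Section \ref{SecNonComLocOpenSets} exploited the infinite-order vanishing of Tougeron's function aplatisseur outside $\Omega$, which has no direct algebraic counterpart. The identity $\mathrm{ad}_s^{N+1}(D)=0$ only gives the weaker estimate that $s^{N_l+1}$ annihilates $C_l(f,-)$, and turning this into an exact right $\star$-annihilator $g\in S$ is precisely where the iterative $\sigma CM$ construction becomes unavoidable. By contrast step $(i.b)$ is long but conceptually routine once Vezzosi's theorem and the order-filtration estimate are in hand, and $(i.a)$ is essentially the same computation as in Theorem \ref{TLocalizationEquivalenceOpenSubset}.
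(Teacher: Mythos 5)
Your overall strategy coincides with the paper's: show that $(A_{S_0}[[\lambda]],\star_{S_0})$ with the coefficient-wise numerator map is a right $K[[\lambda]]$-algebra of fractions for $(A[[\lambda]],S)$ in the sense of Definition~\ref{defi: bigdef}, then invoke Theorem~\ref{TLocalizationForRightDenominatorSets} and symmetrize. The main tools are also the paper's: Vezzosi's localized differential operators and the two identities $D(a^n m)=a^{n-k}\tilde{D}_{[a]}(m)$, $a^nD(m)=\check{D}_{[a]}(a^{n-k}m)$, i.e.\ eqn~(\ref{EqCompDiffopsWithPowersMult}). Your $(i.a)$ is correct, and your $(i.b)$ is sound; there you run $\sigma CM$ twice (first a common denominator $s_*$, then a killer $g_0$ with $s_*^{L_l}\mid g_0$), whereas the paper runs it once on the sequence $\big((s_0\cdots s_n)^{2\kappa_n+1}\big)_n$ built directly from the original denominators, merging the two passes; both work.

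The genuine gap is in $(i.c)$. You correctly produce $s\in S_0$ with $sf=0$ and correctly note that $\operatorname{ad}_s$-nilpotence gives $s^{N_l+1}C_l(f_{k-l},\cdot)=0$, but that only yields $s^M(f\star g)_k=0$ for large $M$, not $(f\star g)_k=0$, since $s$ is not assumed to be a non-zero-divisor. To conclude you must build $g$ out of sufficiently high powers of $s$ and then use \emph{both} identities of (\ref{EqCompDiffopsWithPowersMult}): pull a power of $s$ out of the second slot of $C_l$, then push it into the first slot, where the residual factor of $s$ kills $f_{k-l}$. Your iterative construction $f^{(n+1)}:=f^{(n)}\star s^{(n)}$ does not accomplish this and is moreover superfluous, since $s^{(0)}$ already annihilates every component $f_i$, so the later $s^{(n)}$ contribute nothing new; and the sequence of zeroth coefficients $s^{(0)}s^{(1)}\cdots s^{(n-1)}$ you feed into the final $\sigma CM$ contains each $s^{(j)}$ only to the first power, short by the factor $\kappa_n$ needed for the Vezzosi-filtration cancellation. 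The repair is to drop the iteration and reuse your own $(i.b)$ machinery: apply $\sigma CM$ to $(s^{M_k})_k$ with $M_k:=\max_{l\le k}(N_l+N''_l+1)$, obtaining $g\in S_0$ divisible by $s^{M_k}$ for every $k$, then apply the two pull-out identities termwise to get $(f\star g)_k=0$. This is precisely what the paper does, with the sequence $\big((s_0\cdots s_n)^{2\kappa_n+1}\big)_n$ in place of your $(s^{M_k})_k$, where now $s_n f_n=0$.
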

\begin{proof}
  We first note the following easy, but important property
  of general differential operators $D:M\to N$ of order $k$ where 
  $M$ and $N$ are arbitrary $A$-modules: for any $a\in A$ and
  $n\in\mathbb{N}$ with $n\geq k$ there are differential
  operators $\tilde{D}_{[a]},\check{D}_{[a]}:M\to N$
  of order $k$ such that for all $m\in M$
  \begin{equation}\label{EqCompDiffopsWithPowersMult}
    D(a^nm)=a^{n-k}\tilde{D}_{[a]}(m)
    ~~~\mathrm{and}~~~
    a^nD(m)=\check{D}_{[a]}(a^{n-k}m).
  \end{equation}
  Indeed write $R^n_a=\big(L_a-\mathrm{ad}_a)^n$ for the 
  term on the left of the first equation, and
  $L^n_a=\big(\mathrm{ad}_a+R_a)^n$ for the 
  term on the left of the second equation, apply the binomial theorem, and use
  that all maps $\mathrm{ad}_a^{l}(D)$ are differential
  operators of order $k-l\leq k$ and 
  $\mathrm{ad}_a^{k+1}(D)=0$.\\
  Next, let $\star=\sum_{n=0}^\infty\lambda^nC_n$ be the star product with algebraic bidifferential operators $C_n$,
  $n\in\mathbb{N}$. We can assume that each $C_n$ has a 
  `bi-order' $(k_n,k_n)$ with $k_n\in\mathbb{N}$ for each
  $n\in\mathbb{N}$ (of course $k_0=0$), and for each $n\in\mathbb{N}$ we define the nonnegative integer $\kappa_n:=\max\{k_0=0,k_1,\ldots,k_n\}$.\\
  We shall show that 
  $\big(A_{S_0}[[\lambda]],\star_{S_0}\big)$ is a right algebra of fractions of $\big(A[[\lambda]],\star,S\big)$
  along the (algebraisized) lines of the proof of Thm
  \ref{TLocalizationEquivalenceOpenSubset}:\\
  $\bullet$ It follows from the previous section
  (and from the beginning of the proof of Theorem \ref{TLocalizationEquivalenceOpenSubset})
  that the numerator morphism
  $\eta:A[[\lambda]]\to A_{S_0}[[\lambda]]$ is $S$-inverting.
  \\
  $\bullet$~``\emph{Every $\phi=\sum_{n=0}^\infty
  	 \lambda^n \frac{a_n}{s_n}\in A_{S_0}[[\lambda]]$
  	is equal to
  	$\eta(f)\star_{S_0}\eta(g)^{\star_{S_0} -1}$ for some $f=\sum_{n=0}^\infty\lambda^n\alpha_n\in A[[\lambda]]$ and
  $g\in S$}'': here of course $a_0,a_1,\ldots\in A$, 
$\alpha_0,\alpha_1,\ldots,\in A$, and
$s_0,s_1,\ldots \in S$. We make the ansatz $g=s\in S_0$ of a `fonction applatisseur', and consider
\begin{equation}\label{EqCompPhiStarSZeroSOverOne}
   \left(\phi \star_{S_0} \frac{s}{1}\right)_n
   = \sum_{u=0}^nC_{uS_0}\left(\frac{a_{n-u}}{s_{n-u}},
            \frac{s}{1}\right)
    \stackrel{(\ref{EqDefDifferentialOpLocalized})}{=}
    \sum_{u=0}^n\sum_{v=1}^{\kappa_n + 1}
        {\kappa_n+1 \choose v}(-1)^{v+1}
        \frac{C_u\big(s_{n-u}^{v-1}a_{n-u},s\big)}
                 {s_{n-u}^{v}}
\end{equation}
We have to choose $s\in S_0$ in such a way as to `kill the
denominators occurring on the right hand side of the preceding equation': thanks to the $\sigma CM$ property,
for the sequence 
$\left(
(s_0s_1\cdots s_n)^{2\kappa_n+1}\right)_{n\in\mathbb{N}}$
which is in $S_0$ there is a sequence 
$(b_n)_{n\in\mathbb{N}}$ and $s\in S_0$ such that for each $n\in\mathbb{N}$
we have 
\begin{equation}\label{EqDefKillingSequenceIdentity}
      \forall~n\in\mathbb{N}:~~
      (s_0s_1\cdots s_n)^{2\kappa_n+1}b_n=s.
\end{equation}
Clearly, in each of the numerators of the fractions on the right hand side of eqn
(\ref{EqCompPhiStarSZeroSOverOne}) the above $s$ can be
written as a product of $s_{n-u}^{2\kappa_n+1}c_{n,u}$ with
$c_{n,u}$ is a product of $b_n$ and some factors of the
above sequence. By the first equation of
(\ref{EqCompDiffopsWithPowersMult}) we can pull 
$s_{n-u}^{\kappa_n+1}$ out of the second argument of the bidifferential operator in
the numerator, and this factor in the numerator cancels each denominator.
This shows that there is $f\in A[[\lambda]]$ such that $\phi\star_{S_0}\eta(s)=\eta(f)$ and since
$\eta(s)$ is $\star_{S_0}$-invertible, the statement is
proved.\\
$\bullet$~\emph{The kernel of $\eta$ is equal to the space
	of elements $f\in R$ such that there is $g\in S$
	with $f\star g=0$:} indeed, the statement
   $f=\sum_{n=0}\lambda^n f_n\in A[[\lambda]]$ 
   is such that $\frac{f}{1}=\eta(f)=0$ is equivalent to the statement
   for each $n\in\mathbb{N}$ there is $s_n\in S_0$ such that
   $f_ns_n=0$. In order to get an idea of $g\in S_0$ we again
   make the ansatz $g=s\in S_0$ and we compute for each $n\in\mathbb{N}$
\begin{equation}\label{EqCompFTimesKillerFunction}
  (f\star s)_n=\sum_{u=0}^nC_u(f_{n-u},s).
\end{equation}
 We now take the same element $s$ constructed in the preceding part of the proof satisfying
 eqn (\ref{EqDefKillingSequenceIdentity}) with respect
 to the above $s_0,s_1,\ldots \in S_0$ each killing 
 $f_0,f_1,\ldots$. As in the preceding part, we can pull
 a factor $s_{n-u}^{\kappa_n+1}$ out of the second argument
 of the bidifferential operator $C_u$ (upon using the first equation of eqn (\ref{EqCompDiffopsWithPowersMult})), and we
 put it then into the first argument of the resulting
 bidifferential operator where a factor of $s_{n-u}$ remains
 in front of $f_{n-u}$ which gives zero (upon using the
 second equation of (\ref{EqCompDiffopsWithPowersMult})). It follows that
 this choice of $s$ makes all the terms in eqn
 (\ref{EqCompFTimesKillerFunction}) vanish which shows the
 kernel of $\eta$ is contained in the subset of all $f$
 killed by right multiplication of some $g\in S$.
 The other inclusion is trivial since $\eta$ is an $S$-inverting  morphism of algebras, and $f\star g=0$
 for some $g\in S$
 implies $\eta(f)\star_{S_0}\eta(g)=0$ implying
 $\eta(f)= 0$ since $\eta(g)$ is invertible in $A_{S_0}[[\lambda]]$.\\
 It is obvious that the preceding constructions can be
 done for left fractions etc. by interchanging the arguments
 in the bidifferential operators.
 This proves the Proposition since $\big(A_{S_0}[[\lambda]],\star_{S_0}\big)$ is a right (and left)
 algebra of fractions of $\big(A[[\lambda]],\star,S\big)$
 in the sense of Definition \ref{defi: bigdef}.
 \end{proof}

Note that the property $\sigma CM$ is NOT satisfied for any
`interesting' multiplicative subset $S_0$ of a \emph{Noetherian domain} $A$ where we suppose that $S_0$ does not contain $0$: we assume that there is a noninvertible
element $s_0$ in $S_0$ because otherwise both localizations
are isomorphic to the original algebra $\big(A[[\lambda]],\star\big)$. Then the sequence of principal ideals 
$\left(s_0^nA\right)_{n\in\mathbb{N}}$ clearly
equals the sequence of powers $\left(I^n\right)_{n\in\mathbb{N}}$ with $I=s_0A$, and Krull's Intersection Theorem (see e.g.~\cite[p.200, Ch.III 3.2, Corollary]{BouCommAlg}) states that 
$\cap_{n\in\mathbb{N}}s_0^nA=\{0\}$ whence for the sequence
$(s_0^n)_{n\in\mathbb{N}}$ no sequence $(a_n)_{n\in\mathbb{N}}$ can be found to satisfy property
$\sigma CM$.

\section{Non Ore multiplicative subsets in deformation quantization}
\label{SecNonOreExample}

The following example provides a multiplicative subset $S$
of a deformed algebra $\big(R=A[[\lambda]],\star\big)$ 
which is of the deformation type
$S_0+\lambda R$ (where $S_0$ is a multiplicative subset of $A$) which fails to satisfy the Ore condition, but
is a subset of a large right denominator subset of $R$.
This shows that the second problem we raised in the previous
Section \ref{SecCommutativelyLocalizedStarProducts} does not seem to be immediately trivial.

Consider
$A=\mathcal{C}^\infty(\mathbb{R}^2,\mathbb{R})$ with the
standard star product $\star$ given by formula
(\ref{EqDefStandardStarProduct}). Let $R=A[[\lambda]]$,
and let $\Omega\subset \mathbb{R}^2$ be the dense open set
of all $(x,p)\in \mathbb{R}^2$ where $x\neq 0$.
Set 
\begin{equation}
S_0:=\{1,x,x^2,\ldots\}\subset A~~\mathrm{and}~~
S:=S_0+\lambda R\subset R.
\end{equation}
Recalling the multiplicative subset $S_\Omega=\{g\in A~|~\forall~x\in\Omega:~g(x)\neq 0\}$ we have the
\begin{prop}
	The subset $S\subset R$
	is a multiplicative subset of $(R,\star)$ which is
	contained in the right denominator subset 
	$S_\Omega+\lambda R\subset R$
	(see section \ref{SecNonComLocOpenSets}), but which is neither right nor left Ore.
\end{prop}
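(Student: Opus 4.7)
The plan is to dispose of the easy items first, then construct an explicit counterexample to the right Ore condition, and finally transfer it to the left Ore condition via an anti-automorphism of $\star_s$.

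Multiplicativity of $S$ in $(R,\star_s)$ is immediate: $1\in S$, and for $g,h\in S$ with leading terms $g_0=x^m$, $h_0=x^n$, the identity $x^m\star_s x^n=x^{m+n}$ (every $p$-derivative of $x^m$ vanishes) ensures $(g\star_s h)_0=x^{m+n}\in S_0$. The inclusion $S\subset S_\Omega+\lambda R$ holds because each $x^n$ is nowhere zero on $\Omega=\{x\neq 0\}$; combined with Theorem~\ref{TLocalizationEquivalenceOpenSubset} this places $S$ inside a bona fide right (and left) denominator subset of $R$.

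The central step is to take $s=x$ and let $r\in A$ be the smooth function, independent of $x$, defined by $r(x,p):=e^{-1/p^2}$ for $p\neq 0$ and $r(x,0):=0$. Suppose for contradiction that there exist $s'\in S$ and $r'\in R$ with $r\star_s s'=x\star_s r'=xr'$. Writing $s'_0=x^n$ (the case $n=0$ is ruled out immediately, since $r/x$ is not smooth), I would compute the $\lambda^n$-coefficient of $r\star_s s'$ restricted to $x=0$ using $\partial_x^n x^n|_{x=0}=n!$ while $\partial_x^l x^n|_{x=0}=0$ for $l<n$, to obtain
\[
r^{(n)}(p)+\sum_{m=0}^{n-1}\frac{r^{(m)}(p)}{m!}\,\partial_x^m s'_{n-m}(0,p)=0\qquad\forall\,p\in\mathbb R.
\]
An easy induction gives, for $p\neq 0$, the identity $r^{(k)}(p)/r(p)=P_k(1/p)$ where $P_k$ is a polynomial in $1/p$ of degree exactly $3k$ with leading coefficient $2^k$. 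Dividing the displayed identity by $r(p)$, multiplying through by $p^{3n}$, and letting $p\to 0$, the $m=n$ contribution tends to $2^n$, while for each $m<n$ the corresponding contribution is of order $O(p^{3(n-m)})$ multiplied by $\partial_x^m s'_{n-m}(0,p)$, which is smooth in $p$ and hence bounded near $0$; all such terms therefore vanish in the limit. The resulting equality $2^n=0$ is absurd, ruling out every $n\geq 1$ and thus every $s'\in S$.

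The principal technical obstacle lies precisely in this singular analysis: one must check that the order-$3n$ pole of $r^{(n)}/r$ at $p=0$ strictly dominates every linear combination of the lower-order poles of the $r^{(m)}/r$ with smooth bounded coefficients. For the failure of left Ore I would invoke the Neumaier-modified involution $V=L\circ T$ of equation~(\ref{EqDefNeumaierModified}); since $T(x^n)=x^n$ (because $\partial_p x^n=0$) and $L$ merely flips the sign of $\lambda$, one verifies $V(S)=S$, while $V(f)\star_s V(g)=V(g\star_s f)$ makes $V$ an anti-automorphism of $(R,\star_s)$. Applying $V$ to a hypothetical right Ore identity $r\star_s s'=s\star_s r'$ produces a left Ore identity for $(V(r),V(s))\in R\times S$, so right and left Ore are equivalent for $S$ and the failure above transfers.
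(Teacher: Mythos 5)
Your proof is correct, and the central construction is genuinely different from the paper's. The paper builds a tailor-made Tougeron-type function $r(x,p)=\sum_{n\geq 0}\chi(p-n)\,\frac{(p-n)^n}{n!}$, a locally finite sum of shifted bump functions, whose jets are arranged so that $\frac{\partial^k r}{\partial p^k}(0,m)=0$ for $k<m$ and $=1$ for $k=m$; the failure of right Ore is then read off as the single-point evaluation $\mathcal{R}_m(r',s')(0,m)=1\neq 0$. You instead take the classical flat function $r(p)=e^{-1/p^2}$ and derive the contradiction from the growth of the poles of $r^{(k)}/r$ at $p=0$: since $r^{(k)}/r=P_k(1/p)$ with $P_k$ of degree exactly $3k$ in $1/p$ and leading coefficient $2^k$, multiplying the $\lambda^n$-coefficient relation by $p^{3n}$ and letting $p\to 0$ kills the $m<n$ terms (each is $O(p^{3(n-m)})$ times a factor that is smooth, hence bounded, near $p=0$) and leaves $2^n=0$. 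Both arguments rest on the same structural feature of $\star_s$ -- that the $\lambda^k$-term takes $k$ $p$-derivatives of the left factor and $k$ $x$-derivatives of the right, so $\frac{\partial^k x^n}{\partial x^k}\big|_{x=0}$ isolates the $k=n$ contribution -- but yours replaces the hand-built function and the pointwise jet condition by the standard flat function and an asymptotic estimate. What the paper's version buys is a one-line contradiction at a fixed point with no limiting argument (and the remark that an analytic $r$ would do as well via Weierstrass factors); what yours buys is a simpler, universally known $r$ at the cost of the short degree/leading-coefficient lemma for $P_k$, and indeed the non-analyticity of $e^{-1/p^2}$ at $p=0$ is precisely what drives the divergence of $r^{(k)}/r$. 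The remaining items -- multiplicativity of $S$, the inclusion $S\subset S_\Omega+\lambda R$, and the transfer of the right Ore failure to a left Ore failure via the anti-automorphism $V=L\circ T$ with $V(S)=S$ (using $T(x^n)=x^n$) -- coincide with the paper's argument.
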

\begin{proof}
	Since $x^m\star x^n=x^{m+n}$ it is clear that
	$S$ is a multiplicative subset of $R$ which clearly
	is a subset of
	$S_\Omega$. Next pick a smooth real-valued function
	$\chi:\mathbb{R}\to\mathbb{R}$ with the following
	properties
	\[
	\forall~p\in\mathbb{R}:~0\leq \chi(p)\leq 1,~~
	\mathrm{supp}(\chi)\subset
	\left[-\frac{1}{3},\frac{1}{3}\right],~~
	\mathrm{and~~}\forall~p\in
	\left[-\frac{1}{6},\frac{1}{6}\right]:~\chi(p)=1,
	\]
	which is well-known to exist,
	and define the smooth functions $r\in A\subset R$
	and $s\in S_0\subset S$ by
	\[
	r(x,p):=\sum_{n=0}^\infty
	\chi(p-n)\frac{(p-n)^n}{n!} 
	~~\mathrm{and}~~
	s(x,p)=x
	\]
	where $r$ is well-defined as a locally finite sum whose terms have mutually disjoint supports.
	We shall only need the following property of $r$
	which is easy to see:
	\begin{equation}\label{EqCompDerivativesOfFunnyFunctionr}
	\forall~n,k\in\mathbb{N}:~~~
	\frac{\partial^k r}{\partial p^k}(0,n)
	=\left\{ \begin{array}{cl}
	0 & \mathrm{if}~0\leq k\leq n-1, \\
	1 & \mathrm{if}~ k=n.
	\end{array}\right. .
	\end{equation}
	We remark that there are also real analytic functions
	$r:\mathbb{R}^2\to \mathbb{R}$ having the preceding
	property (\ref{EqCompDerivativesOfFunnyFunctionr}): it suffices to take the real part of the holomorphic
	function constructed by Weierstrass's elementary factors,
	see e.g.~\cite[p.~303, Thm.~15.9]{Rud87}.\\
	Next note that an element $s'\in R$ is contained in
	$S$ iff there is a unique nonnegative integer $m$ and a unique smooth function
	$g\in \lambda A[[\lambda]]$ (i.e.~$g_0=0$) such that $s'(x,p)=x^m+g$.
	For any such $s'\in S$ and $r'\in R$ we set
	\[
	\mathcal{R}(r',s'):=
	\sum_{k=0}^\infty \lambda^k\mathcal{R}_k(r',s')
	:= r \star s' - x\star r'
	\]
	which is a kind of deviation from the right Ore property for general $s'\in S$ and $r'\in R$.
	It is easy to compute that
	\[
	\forall~0\leq k\leq m:~~
	\mathcal{R}_k(r',s')(x,p)=
	{m \choose k}\frac{\partial^k r}{\partial p^k}(x,p)
	x^{m-k}
	+ \sum_{l=0}^{k-1}\frac{1}{l!}
	\frac{\partial^l r}{\partial p^l}(x,p)
	\frac{\partial^l g_{k-l}}{\partial x^l}(x,p)
	-xr_k'(x,p)
	\]
	where the empty sum (occurring for $k=0$) is defined to be $0$. Using property
	(\ref{EqCompDerivativesOfFunnyFunctionr}) it is immediate
	that
	\[
	\forall~m\in\mathbb{N},~\forall~g\in\lambda R,
	~\forall~r'\in R:~~
	\mathcal{R}_m(r',s')(0,m)=1 \neq 0,
	\mathrm{~~~hence~~~}\mathcal{R}_m(r',s')\neq 0
	\]
	showing that for the given $r\in R$, $s\in S$ there are
	no $r'\in R$ and $s'\in S$ satifying the right Ore condition. An easy application of
	eqn (\ref{EqDefNeumaierModified}) using the fact that
	$S$ is obviously stable by the bijection $V$ shows that it also fails to satisfy the left Ore condition.
\end{proof}
This example shows the difference between the general
noncommutative localization according to Proposition \ref{PGeneralLocalization} and the localization with respect to multiplicative subsets satisfying the Ore conditions,
see Theorem \ref{TLocalizationForRightDenominatorSets}:
The localization of $R$ w.r.t~$S$ exists, and its elements
are multifractions, see eqn (\ref{EqCompLocAlgebraGeneralElement}): but mapping it into the localization
with respect the the bigger Ore subset $S_\Omega+\lambda R$
helps to transform all the multifractions into simple
right (or left) fractions.

	\small

	\normalsize
\end{document}